\documentclass[a4paper,12pt]{amsart}

\usepackage[paper=a4paper,textwidth=400pt,textheight=620pt,includeheadfoot,twoside=True]{geometry}

\setcounter{MaxMatrixCols}{20}

\usepackage{amsmath,amssymb, amsthm,amsxtra}

\usepackage[mathscr]{eucal}

\usepackage{bm}
\usepackage[all]{xy}

\usepackage{aliascnt}

\theoremstyle{plain}
\newtheorem{thm}{Theorem}[section]
\newtheorem*{thm*}{Theorem}

\newaliascnt{prop}{thm}
\newaliascnt{cor}{thm}
\newaliascnt{lem}{thm}
\newaliascnt{claim}{thm}
\newaliascnt{defn}{thm}
\newaliascnt{ques}{thm}
\newaliascnt{conj}{thm}
\newaliascnt{fact}{thm}
\newaliascnt{rem}{thm}
\newaliascnt{ex}{thm}
\newtheorem{prop}[prop]{Proposition}

\newtheorem{lem}[lem]{Lemma}

\newtheorem*{prop*}{Proposition}
\newtheorem*{cor*}{Corollary}
\newtheorem*{lem*}{Lemma}
\newtheorem*{claim*}{Claim}
\theoremstyle{definition}
\newtheorem{defn}[defn]{Definition}
\newtheorem{ques}[ques]{Question}

\newtheorem*{defn*}{Definition}
\newtheorem*{ques*}{Question}
\newtheorem*{conj*}{Conjecture}

\newtheorem*{prob*}{Problem}

\newtheorem{rem}[rem]{Remark}
\newtheorem{ex}[ex]{Example}
\newtheorem*{fact*}{Fact}
\newtheorem*{rem*}{Remark}
\newtheorem*{ex*}{Example}
\aliascntresetthe{prop}
\aliascntresetthe{cor}
\aliascntresetthe{lem}
\aliascntresetthe{claim}
\aliascntresetthe{defn}
\aliascntresetthe{ques}
\aliascntresetthe{conj}
\aliascntresetthe{fact}
\aliascntresetthe{rem}
\aliascntresetthe{ex}
%

\usepackage[pointedenum]{paralist}
\usepackage{varioref}
\labelformat{equation}{\textnormal{(#1)}}
\labelformat{enumi}{\textnormal{(#1)}}

\setdefaultenum{(a)}{(i)}{(1)}{(A)}%

\usepackage[a4paper]{hyperref}

\def\textsectionN~{\textsection{}}



\renewcommand\phi{\varphi}
\renewcommand\epsilon{\varepsilon}
\renewcommand\leq{\leqslant}
\renewcommand\geq{\geqslant}

\makeatletter
\newcommand{\set}{%
  \@ifstar{\@setstar}{\@set}%
}%
\newcommand{\@setstar}[2]{\{\, #1 \mid #2 \,\}}
\newcommand{\@set}[1]{\{\, #1 \,\}}
\makeatother

\newcommand{\trans}[1][1]{\raisebox{#1ex}{\scriptsize\kern0.1em$t$\kern-0.1em}}

\newcommand{\PP}{\mathbb{P}}

\newcommand{\PN}{\PP^N}

\newcommand{\A}{\mathbb{A}}
\newcommand{\sO}{\mathscr{O}}
\newcommand{\ZZ}{\mathbb{Z}}

\newcommand{\CC}{\mathbb{C}}
\newcommand{\abs}[1]{\lvert #1 \rvert}

\newcommand{\textgene}[1]{\ \ \text{#1}\ \,}

\newcommand{\textand}{\textgene{and}}

\DeclareMathOperator{\pr}{pr}%
\DeclareMathOperator{\rk}{rk}%
\DeclareMathOperator{\codim}{codim}%
\DeclareMathOperator{\Univ}{Univ}%
%
%
%

\newcommand{\Gr}{\mathbb{G}}
\newcommand\spcirc{^\circ}
\newcommand{\tdiff}[2]{{\partial #1}/{\partial #2}}
\newcommand{\diff}{\tdiff}


\newcommand{\sH}{\mathscr{H}}
\newcommand{\sHd}{\mathscr{H}_d}
\newcommand{\Fh}{F_{\bm h}}
\newcommand{\shI}{\mathcal{I}}

\newcommand\Go{\Gr\spcirc}

\newcommand\RNi{1 \leq i \leq r}
\newcommand\RNk{0 \leq k \leq d^i}

\newcommand{\Da}{D_a}
\newcommand{\Db}{D_b}

\newcommand\matExa[2]{#1_{#2_1}(A_L) & #1_{#2_2}(A_L) & \cdots & #1_{#2_{N-1}}(A_L)}
\newcommand\ee{\bm{e}}

%
%

\title[Convex separably rationally connected complete intersections]
{Convex separably rationally connected \\ complete intersections}%

\email{katu@toki.waseda.jp}
\author[K.~Furukawa]{Katsuhisa~FURUKAWA}
\address{
  Department of Mathematics,
  School of Fundamental Science and Engineering,
  Waseda~University,
  Ohkubo~3-4-1, Shinjuku, Tokyo, 169-8555, Japan
}
\subjclass[2000]{Primary 14E08, 14J45, 14M17}
\keywords{convex, nef bundle, rational homogeneous space}

%
%

\begin{document}

\maketitle

\begin{abstract}
  We give a generalization of a result of R.~Pandharipande to arbitrary characteristic:
  We prove that, if $X$ is a convex, separably rationally connected, smooth complete intersection in $\mathbb{P}^N$ over an algebraically closed field of arbitrary characteristic, then $X$ is rational homogeneous.
\end{abstract}

\section{Introduction}
\label{sec:introduction}

F.~Campana and T.~Peternell \cite{CP1} conjectured that, if $X$ is a smooth Fano variety (over $\CC$)
with nef tangent bundle, then $X$ is rational homogeneous.
This was answered over $\CC$ affirmatively by Campana and Peternell
in the case of $\dim(X) \leq 3$ \cite{CP1} and in the case of $\dim(X) = 4$ with Picard number $\rho_X > 1$ \cite{CP2},
by N.~Mok \cite{Mo} and J.~M.~Hwang \cite{Hw} for any $4$-dimensional $X$, and
by K.~Watanabe in the case of $\dim(X) = 5$ with $\rho_X > 1$ \cite{Wa}.

We say that a variety $X$ is \emph{convex} if every morphism $\mu: \PP^1 \rightarrow X$
satisfies $H^1(\PP^1, \mu^*T_X) = 0$.
R. Pandharipande \cite{Pa} proved that,
if $X$ is a convex, rationally connected,
smooth complete intersection in $\PN$ over $\CC$,
then $X$ is rational homogeneous.
Since a smooth Fano variety is rationally connected (over $\CC$),
and since nefness of the tangent bundle implies convexity,
the result solved the conjecture for complete intersections.

\begin{ques}\label{thm:ques}
  Does the statement of the above theorem of Pandharipande hold in arbitrary characteristic?
\end{ques}

In this paper, we prove:

\begin{thm}\label{thm:mainthm} Let $X$ be a convex, smooth complete intersection in $\PN$
  over an algebraically closed field of arbitrary characteristic.
  Assume that:
  \textnormal{(i)} there exists an immersion $\PP^1 \rightarrow X$.
  Then $X$ is of degree $\leq 2$;
  in particular, $X$ is rational homogeneous.
\end{thm}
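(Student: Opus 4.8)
The plan is to extract from convexity---applied to every finite cover of the rational curves at our disposal---the nefness of $T_X$ along lines on $X$, and then to see that this positivity forces the multidegree $(d_1,\dots,d_r)$ of $X$ to have product at most $2$; such an $X$ is a linear subspace or a smooth quadric, hence rational homogeneous. Set $n=\dim X$ and start from the immersion $\mu\colon\PP^1\to X$ of hypothesis~(i); write $\mu^*T_X\cong\bigoplus_{i=1}^{n}\sO_{\PP^1}(a_i)$ and $e=\deg\mu^*\sO_{\PN}(1)\geq 1$. For a finite morphism $f\colon\PP^1\to\PP^1$ of degree $2$ the composite $\mu\circ f\colon\PP^1\to X$ is again a morphism, so convexity gives $H^1(\PP^1,\bigoplus_i\sO_{\PP^1}(2a_i))=0$, i.e.\ $a_i\geq 0$ for all $i$; thus $\mu^*T_X$ is nef. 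Since $\mu$ is an immersion, $\sO_{\PP^1}(2)=T_{\PP^1}$ is a subbundle of $\mu^*T_X$, so some $a_i\geq 2$ and $\deg\mu^*(-K_X)=\sum_i a_i\geq 2>0$. As $-K_X=\sO_X(N+1-\sum_j d_j)$ and $e\geq 1$, this forces $\sum_j d_j\leq N$: $X$ is Fano.

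If $n=1$, then $\mu$ is dominant, so $X\cong\PP^1$, and a smooth complete intersection curve of arithmetic genus $0$ is a line or a conic, so $X$ has degree $\leq 2$ and we are done; assume henceforth $n\geq 2$. A smooth Fano complete intersection of dimension $\geq 2$ contains a line $\ell$ (a standard incidence count: the variety of pairs (line, complete intersection of type $(d_j)$ through it) dominates, hence being proper surjects onto, the parameter space, because $\sum_j(d_j+1)\leq N+r\leq 2N-2=\dim\Gr(1,N)$). Applying the degree-$2$-cover argument above to $\ell\hookrightarrow X$ shows $T_X|_\ell$, and hence its quotient $N_{\ell/X}$, is nef. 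On the other hand $N_{\ell/X}$ is the kernel of the bundle surjection $N_{\ell/\PN}\cong\sO_\ell(1)^{\oplus(N-1)}\twoheadrightarrow N_{X/\PN}|_\ell$, hence is saturated in $\sO_\ell(1)^{\oplus(N-1)}$ with all summands $\leq 1$; combined with $\deg N_{\ell/X}=N-1-\sum_j d_j=:\delta$ this gives $N_{\ell/X}\cong\sO_\ell(1)^{\oplus\delta}\oplus\sO_\ell^{\oplus(n-1-\delta)}$ with $\delta\geq 0$ (so $X$ has index $\geq 2$), and this balanced, nef, split shape holds for \emph{every} line in $X$.

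It remains to deduce $\prod_j d_j\leq 2$. I would translate the preceding constraint into the geometry of the variety of lines through a point: for $x\in X$ the lines in $X$ through $x$ form $\Sigma_x\subseteq\PP(T_xX)=\PP^{n-1}$, cut out by forms $\sigma^{(m)}_j$ of degree $m$ ($1\leq j\leq r$, $2\leq m\leq d_j$) read off from the Taylor expansions of the $F_j$ along lines issuing from $x$; since $h^0(N_{\ell/X}(-1))=\dim T_{[\ell]}\Sigma_x$, and this equals the expected dimension $\delta=(n-1)-\sum_j(d_j-1)$ exactly when $N_{\ell/X}$ is nef, the previous paragraph says $\Sigma_x$ is smooth of dimension $\delta$ at every point, for every $x\in X$ (all of $X$ being covered by lines). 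Now if $\prod_j d_j\geq 3$, so $\sum_j(d_j-1)\geq 2$, the forms $\sigma^{(m)}_j$ include either two quadrics from distinct hypersurfaces or a quadric and a cubic from one hypersurface of degree $\geq 3$; I would then obtain a contradiction by a dimension count on the incidence of pairs $(x,v)$ with $v\in\Sigma_x$ a singular point: the tuples of coefficients yielding a singular complete intersection of the relevant type in $\PP^{n-1}$ form a nonempty hypersurface, and since $X$ is smooth of dimension $\geq 2$ the assignment $x\mapsto(\sigma^{(m)}_j)$ cannot avoid it, so some $\Sigma_x$ has a singular point; this produces a line $\ell\subset X$ with $h^0(N_{\ell/X}(-1))>\delta$, i.e.\ $N_{\ell/X}$ not nef, contradicting convexity. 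The main obstacle is precisely this last step---making the degeneration rigorous and, above all, characteristic-free, since in positive characteristic the higher fundamental forms and the Gauss map of $X$ can degenerate (already a smooth conic is a strange curve in characteristic~$2$), so one has to check that any such pathology either still yields a non-nef line or is itself incompatible with the smoothness of $X$. Granting this, $\prod_j d_j\leq 2$, so $X$ is a linear subspace or a smooth quadric, hence rational homogeneous.
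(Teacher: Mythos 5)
Your first three paragraphs are sound and run parallel to the paper's ``soft'' arguments: the degree-two cover converting convexity into nefness of $\mu^*T_X$ is exactly the mechanism of \autoref{thm:X-d>N-nonfree} and the last step of the paper's proof; the Fano bound $\abs{\bm d}\leq N$, the existence of a line (\autoref{thm:sIsH-surj}), and the dictionary ``$N_{\ell/X}$ nef $\Leftrightarrow$ $\ell$ free $\Leftrightarrow$ $\Sigma_x$ smooth of dimension $\delta$ at $[\ell]$'' (\autoref{thm:equiv-surj-nonfree}) are all correct. The genuine gap is the step you yourself flag as the main obstacle: producing, on an \emph{arbitrary} smooth $X$ with $\prod_i d^i>2$, a point $x$ with $\Sigma_x$ singular, i.e.\ a non-free line. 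This is the entire content of \autoref{thm:surj-J-to-H}, whose proof occupies \autoref{sec:param-space-compl}--\autoref{sec:constr-expect-pairs} of the paper, and your discriminant-avoidance heuristic does not establish it. Indeed, as stated the heuristic is false: for a smooth quadric $X$, $\Sigma_x$ is cut out by a single quadratic form, the singular quadrics in $\PP^{n-1}$ also form a nonempty hypersurface, and yet $x\mapsto\sigma^{(2)}_x$ avoids it for every $x$ (all lines on a smooth quadric are free). So ``$X$ has dimension $\geq 2$, hence cannot avoid a hypersurface'' cannot be the argument; the target is a bundle of parameter spaces over $X$ (the $\sigma^{(m)}_j$ depend on a frame for $T_xX$), and nonemptiness of the degeneracy locus requires showing that its class in $\PP(T_X)$ (or in $\Fh$) is nonzero --- a Thom--Porteous/positivity computation that must quantitatively use $\sum_j(d^j-1)\geq 2$ and must survive positive characteristic. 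Two further points your sketch does not reach: in the boundary case $\abs{\bm d}=N-1$ one has $\delta=0$, $\Sigma_x$ is a finite scheme, and ``singular'' means non-reduced; the paper needs a genuinely different argument there (\autoref{thm:etale}: a universal free line through every point would give a nontrivial \'etale cover of the simply connected $X$). And the characteristic-$p$ issues are real, not cosmetic: the explicit equations of \autoref{sec:constr-expect-pairs} are engineered precisely so that the relevant Jacobian matrices have full rank in characteristic $2$ (\autoref{thm:rem-ch2}).

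Note also that the paper never argues on a fixed $X$: it shows that the incidence locus $J$ of non-free pairs $(L,\bm h)$ dominates the whole parameter space $\sH$, by exhibiting one pair at which $J_{\bm h}$ is smooth of the expected dimension $N-r-2$ (\autoref{thm:ci-constr-statement}) and deducing from \autoref{thm:JinI-codim} that $\pr_2|_J$ is smooth, hence surjective. To complete your route you would have to either reproduce such a construction or prove a characteristic-free nonemptiness theorem for the degeneracy locus on each individual $X$; until one of these is supplied, the theorem remains unproved in the essential range $\abs{\bm d}\leq N-1$.
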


Note that a complete intersection $X \subset \PN$ satisfies the above condition (i)
if one of the following conditions (ii-iv) holds:
\begin{inparaenum}[\normalfont (i)]
  \setcounter{enumi}{1}
\item $X$ is separably rationally connected;
\item $X$ is Fano;
\item $2N-2-r \geq \sum_{i=1}^r d^i$ holds, where $(d^1, \dots, d^r)$ is the type of $X$.
\end{inparaenum}
(See \autoref{thm:SRC-conditions}.)
As a corollary, a smooth Fano complete intersection with nef tangent bundle
is rational homogeneous in arbitrary characteristic.

The paper is organized as follows.
In \autoref{sec:param-space-compl},
we first study the parameter space $\sH$ of complete intersections of type $(d^1, \dots, d^r)$
and the incidence variety $I \subset \Gr(1, \PN) \times \sH$
parametrizing pairs $(L, X)$ such that $L \subset X$;
the techniques are based on \cite{BV}, \cite{DM}, \cite{Fu}, \cite[V. 4]{Ko}.
A rational curve $\mu: \PP^1 \rightarrow X$ is said to be \emph{free}
if $H^1(\PP^1, \mu^*T_X \otimes \sO_{\PP^1}(-1)) = 0$.
Next, we define the subset $J \subset I$ parametrizing pairs $(L, X)$ such that $L$ is non-free in $X$,
and we give a calculation method of the defining polynomials of $J$.
In \autoref{sec:constr-expect-pairs},
assuming all $d^i > 1$ and the product $\prod_{\RNi} d^i > 2$,
we construct a pair $(L, X)$ such that
the space of non-free lines in $X$ is smooth and of expected dimension at $L$.
In these constructions, it is necessary to take care of the characteristic two case (see \autoref{thm:rem-ch2}).
In consequence, we show that $\pr_2|_{J}: J \rightarrow \sH$ is surjective.
Thus, under our assumption,
every complete intersection $X$ of type $(d^1, \dots, d^r)$ has a non-free rational curve;
then we have the proof of \autoref{thm:mainthm} in \autoref{sec:existence-non-free}.

\subsection*{Acknowledgments}
The author would like to thank Kiwamu~Watanabe, who
asked \autoref{thm:ques} after his talk at Tokyo Denki University at August 2013.
Watanabe also asked
``Does there exist a counter-example of Campana-Peternell conjecture in positive characteristic?''.
The author was partially supported by JSPS KAKENHI Grant Number 25800030.

\section{Parameter space of complete intersections}\label{sec:param-space-compl}

\subsection{Incidence variety and its projection}

We fix some notation. Let $\sHd$ be the projectivization of the vector space $H^0(\PN, \sO(d))$,
whose general members parametrize hypersurfaces of degrees $d$ in $\PN$.
We take $\bm d = (d^1, \dots, d^r)$
with $r$ positive integers $d^1, \dots, d^r$, and denote by
$\abs{\bm d} := \sum_{\RNi}d^i$.
Let
\[
\sH := \sH_{d^1} \times \dots \times \sH_{d^r},
\]
whose general member $\bm h = (h^1, \dots, h^r)$ defines
an $(N-r)$-dimensional complete intersection $X \subset \PN$ of type $\bm d$.
For a homogeneous polynomial $\phi \in H^0(\PN, \sO(d))$ and a line $L \subset \PN$,
we denote by $\phi|_L$ the image of $\phi$ under the linear map $H^0(\PN, \sO(d)) \rightarrow H^0(L, \sO(d))$.

\begin{defn}
  We set
  \[
  I = \set*{(L, \bm h) \in \Gr(1, \PN) \times \sH}{h^1|_L = \dots = h^r|_L = 0},
  \]
  the incidence variety whose general members
  parametrize pairs
  $(L, X)$ such that $L \subset X$.
  We denote by
  \[
  \pr_1:I \rightarrow \Gr(1, \PN),\ \pr_2: I \rightarrow \sH,
  \]
  the first and second projections.
\end{defn}

\begin{rem}\label{thm:codimsIsH}
  Each fiber of $I \rightarrow \Gr(1, \PN)$ at $L$ 
  is isomorphic to $I_{d^1, L} \times \dots \times I_{d^r, L}$,
  where we denote by $\shI_L \subset \sO_{\PN}$ the ideal sheaf of $L \subset \PN$
  and by $I_{d, L} \subset \sHd$ the projectivization of
  $H^0(\PN, \shI_L(d))$. Note that $I_{d, L} \subset \sH_d$
  parametrizes hypersurfaces of degree $d$ containing the line $L$.
  From the exact sequence
  \[
  0 \rightarrow  H^0(\PN, \shI_L(d)) \rightarrow H^0(\PN, \sO_{\PN}(d)) \rightarrow H^0(L, \sO_L(d)) \rightarrow 0,
  \]
  we have $\codim(I_{d, L}, \sH_d) = d+1$. Hence
  \[
  \codim(I, \sH \times \Gr(1, \PN)) = \codim(I_{d^1, L} \times \dots \times I_{d^r, L}, \sH) = \abs{\bm{d}} + r.
  \]
  In addition, 
  \begin{equation}\label{eq:dimI-dimH}
    \dim(I) - \dim(\sH) = \dim \Gr(1, \PN) - (\abs{\bm{d}} + r)
    = 2N-2 -\abs{\bm d}-r.
  \end{equation}
\end{rem}

We denote by
\begin{equation}\label{eq:homog-coordi-PN}
  (S: T: Z_1: Z_2: \dots: Z_{N-1})
\end{equation}
the homogeneous coordinates on $\PN$.

\begin{defn}\label{thm:ci-delh}
  We take a line
  $L = (Z_1 = \dots = Z_{N-1} = 0) \subset \PN$.
  For a homogeneous polynomial $h \in H^0(\PN, \sO(d))$ with $h|_L = 0$,
  we define a homomorphism
  $\delta_L(h): \sO_L(1)^{\oplus N-1} \rightarrow \sO_L(d)$
  by
  \[
  (f_1, \cdots,  f_{N-1}) \mapsto
  h_{Z_1}|_L \cdot f_1 + \dots + h_{Z_{N-1}}|_L \cdot f_{N-1},
  \]
  where $h_{Z_j} := \diff{h}{Z_j} \in H^0(\PN, \sO(d-1))$.

  Now, we take $\bm h = (h^1, \dots, h^r) \in \sH$ such that
  $(L, \bm h) \in I$.
  Then,  $r$ homomorphisms $\delta_L(h^1), \dots, \delta_L(h^r)$ induce
  a homomorphism
  \[
  \delta_L(\bm h): \sO_L(1)^{\oplus N-1}
  \rightarrow \bigoplus_{\RNi} \sO_L(d^i) \cdot \epsilon^i.
  \]
\end{defn} 
\begin{rem}\label{thm:ci-delh-rem}
  Let $K$ be the ground field.
  The image of the linear map
  \[
  H^0(\delta_L(\bm h) (-1)):  H^0(L, \sO)^{\oplus N-1} = K^{\oplus N-1}
  \rightarrow \bigoplus_{\RNi} H^0(L, \sO(d^i-1)) \cdot \epsilon^i
  \]
  is equal to the vector subspace spanned by $N-1$ elements
  \begin{equation}\label{eq:sum-hiZ-epsilon}
    \sum_{\RNi} h^i_{Z_1}|_L\cdot \epsilon^i, \dots, \sum_{\RNi} h^i_{Z_{N-1}}|_L\cdot \epsilon^i.
  \end{equation}
\end{rem}

\begin{lem}\label{thm:delL-naturalmap}
  Let $(L, \bm h) \in I$ be as above.
  Let $X \subset \PN$ be a complete intersection defined by $\bm h$,
  and assume that $X$ is smooth along $L$. Then
  $\delta_L(\bm h)$ is regarded as
  the natural map $N_{L/\PN} \rightarrow N_{X/\PN}|_L$,
\end{lem}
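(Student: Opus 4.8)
The plan is to identify $\delta_L(\bm h)$ with the $\sO_L$-dual of the restriction of conormal sheaves, and then to carry out that dualization by hand. Since $X = V(h^1, \dots, h^r)$ is a complete intersection of type $\bm d$, its conormal sheaf $\shI_X/\shI_X^2$ is locally free of rank $r$, freely generated by the classes of $h^1, \dots, h^r$; dualizing and restricting to $L$ we may write $N_{X/\PN}|_L = \bigoplus_{\RNi}\sO_L(d^i)\cdot\epsilon^i$, with $\epsilon^i$ paired dually to $[h^i]$. Similarly $\shI_L/\shI_L^2$ is freely generated by the classes of $Z_1, \dots, Z_{N-1}$, so $N_{L/\PN} = \sO_L(1)^{\oplus N-1}$ with the $j$-th summand paired dually to $[Z_j]$; under this identification the section $(f_1, \dots, f_{N-1})$ is $\sum_{\RNj} f_j\, [Z_j]^{\vee}$. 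The inclusion of ideal sheaves $\shI_X \subset \shI_L$ coming from $L \subset X$ induces a map $N^{\vee}_{X/\PN}|_L \to N^{\vee}_{L/\PN}$ sending $[h^i]$ to the class of $h^i$ modulo $\shI_L^2$; since $X$ is smooth along $L$, the embedding $L\subset X$ is regular and the $\sO_L$-dual of this map is precisely the natural map $N_{L/\PN}\to N_{X/\PN}|_L$ fitting into the normal bundle sequence $0 \to N_{L/X} \to N_{L/\PN} \to N_{X/\PN}|_L \to 0$.

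The computational core is then to expand $h^i$ modulo $\shI_L^2$ in the basis $[Z_1], \dots, [Z_{N-1}]$. Because $h^i|_L = 0$, one can write $h^i = \sum_{\RNj} a^i_j Z_j$ with each $a^i_j$ homogeneous of degree $d^i - 1$, whence $h^i \equiv \sum_{\RNj} (a^i_j|_L)\,[Z_j] \pmod{\shI_L^2}$. Differentiating this identity and applying the Leibniz rule gives $h^i_{Z_j} = a^i_j + \sum_{k=1}^{N-1} \bigl(\diff{a^i_k}{Z_j}\bigr)\,Z_k$, so restricting to $L$ yields $a^i_j|_L = h^i_{Z_j}|_L$; only first-order derivatives occur, so this is valid in every characteristic. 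Hence $N^{\vee}_{X/\PN}|_L \to N^{\vee}_{L/\PN}$ is given in the chosen bases by $[h^i]\mapsto \sum_{\RNj} (h^i_{Z_j}|_L)\,[Z_j]$.

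Finally I would dualize over $\sO_L$: the dual map carries $\sum_{\RNj} f_j\, [Z_j]^{\vee}$ to the unique section of $N_{X/\PN}|_L$ pairing with each $[h^i]$ to $\sum_{\RNj}(h^i_{Z_j}|_L)\, f_j$, that is, to $\sum_{\RNi}\bigl(\sum_{\RNj} h^i_{Z_j}|_L\cdot f_j\bigr)\epsilon^i$, which is exactly $\delta_L(\bm h)(f_1, \dots, f_{N-1})$ as in \autoref{thm:ci-delh}. I expect the only delicate point, apart from the routine bookkeeping of the $\sO_L(1)$- and $\sO_L(d^i)$-twists and of the duality, to be this first step: confirming that under the smoothness hypothesis the ``natural map'' of the statement indeed coincides with the transpose of the conormal restriction. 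This is standard, but it is where the smoothness of $X$ along $L$ genuinely enters.
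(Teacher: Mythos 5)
Your proposal is correct and follows essentially the same route as the paper: both arguments reduce the claim to computing $h^i$ modulo $\shI_L^2$ via the conormal sequence, writing $h^i = \sum_j h^i_j Z_j$ and identifying the coefficient classes $h^i_j|_L$ with the partials $h^i_{Z_j}|_L$ by the Leibniz rule. You spell out the dualization and the identification of the ``natural map'' a bit more explicitly (the paper delegates this to a citation), but the content is the same.
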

\begin{proof}
  We first consider the case of $r=1$.
  The exact sequence $0 \rightarrow \shI_L^2 \rightarrow \shI_L \rightarrow N_{L/\PN}\spcheck \rightarrow 0$ induces
  the following linear map:
  \begin{equation}\label{eq:IL-Nv}
    H^0(\PN, \shI_L(d)) \rightarrow H^0(L, N_{L/\PN}\spcheck \otimes \sO(d)).
  \end{equation}
  Let $h \in H^0(\PN, \shI_L(d))$ and let $X \subset \PN$ be the hypersurface defined by $h$.
  Then the image of $h$ under the above map gives a natural homomorphism
  $N_{L/\PN} \rightarrow N_{X/\PN}|_L \simeq \sO_L(d)$ (see \cite[Rem.~3.9]{Fu}).

  Since $L = (Z_1 = \dots = Z_{N-1} = 0)$, we can write $h = h_1Z_1 + \dots + h_{N-1}Z_{N-1}$.
  Here the image of $h$ in $H^0(L, N_{L/\PN}\spcheck \otimes \sO(d))$ under \ref{eq:IL-Nv} is given by $(h_1|_L, \dots, h_{N-1}|_L)$.
  On the other hand, we have $h_{Z_i}|_L = h_i|_L$.
  Hence $N_{L/\PN} \rightarrow N_{X/\PN}$ is identified with $\delta_L(h): \sO_L(1)^{\oplus N-1} \rightarrow \sO_L(d)$.

  Next, for any $r \geq 1$,
  considering the linear map
  \begin{equation}\label{eq:ci-IL-Nv}
    H^0(\PN, \shI_L \otimes \bigoplus_{\RNi} \sO_{\PN}(d^i) ) \rightarrow H^0(L, N_{L/\PN}\spcheck \otimes \bigoplus_{\RNi} \sO_L(d^i)),
  \end{equation}
  instead of \ref{eq:IL-Nv},
  we have the assertion in a similar way to the case of $r=1$.
\end{proof}

\begin{rem}\label{thm:equiv-surj-nonfree}
  The linear map $H^0(\delta_L(\bm h) (-1))$ is surjective
  if and only if $L$ is free in $X$, i.e., $H^1(L, N_{L/X}(-1)) = 0$.
  This follows from $H^1(L, N_{L/\PN}(-1)) = 0$ and the following exact sequence:
  \[
  0 \rightarrow N_{L/X} \rightarrow N_{L/\PN} \xrightarrow{\delta_L(\bm h)}
  N_{X/\PN}|_L \rightarrow 0.
  \]
\end{rem}

\begin{lem}\label{thm:sIsH-surj}
  Assume $2N-2-r \geq \abs{\bm d}$. Then
  $\pr_2: I \rightarrow \sH$ is surjective,
  which means that every complete intersection of type $\bm d$ in $\PN$ contains a line.
\end{lem}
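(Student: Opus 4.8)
The plan is to exploit the two projections of $I$ together with a tangent‑space computation. First I would record, using \autoref{thm:codimsIsH}, that $\pr_1\colon I\to\Gr(1,\PN)$ realizes $I$ as the total space of a product of projective bundles (the fibre over $L$ being $I_{d^1,L}\times\dots\times I_{d^r,L}$, with $H^0(\PN,\shI_L(d^i))$ the fibre of a vector bundle on the Grassmannian). Hence $I$ is smooth, irreducible and projective, so $\pr_2\colon I\to\sH$ is proper, $\pr_2(I)$ is closed and irreducible inside the irreducible variety $\sH$, and $\pr_2$ is surjective as soon as it is dominant. Since $I$ is smooth, it is therefore enough to exhibit a single point $(L,\bm h)\in I$ at which $d\pr_2$ is surjective.

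Next I would compute $d\pr_2$ at such a point, assuming the complete intersection $X$ defined by $\bm h$ is smooth along $L$. A first‑order deformation $(\dot L,\dot{\bm h})$ with $\dot L\in H^0(L,N_{L/\PN})$ is tangent to $I$ exactly when $\dot h^i|_L=-\delta_L(h^i)(\dot L)$ in $H^0(L,\sO_L(d^i))$ for all $i$; since $\dot h^i|_L$ ranges over all of $H^0(L,\sO_L(d^i))$ as $\dot h^i$ varies (the restriction $H^0(\PN,\sO(d^i))\to H^0(L,\sO_L(d^i))$ being onto and $h^i|_L=0$), this shows that $d\pr_2$ is surjective precisely when the map on $H^0$ induced by $\delta_L(\bm h)\colon N_{L/\PN}\to N_{X/\PN}|_L$ (\autoref{thm:delL-naturalmap}) is surjective. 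Feeding this into the exact sequence $0\to N_{L/X}\to N_{L/\PN}\xrightarrow{\delta_L(\bm h)}N_{X/\PN}|_L\to0$ and using $H^1(L,N_{L/\PN})=H^1(L,\sO_L(1)^{\oplus N-1})=0$, I conclude that $d\pr_2$ is surjective at $(L,\bm h)$ if and only if $H^1(L,N_{L/X})=0$.

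It then remains to exhibit one pair $(L,X)$ with $X$ a complete intersection of type $\bm d$, smooth along $L$, and $H^1(L,N_{L/X})=0$, and this is where the hypothesis enters. The bundle $N_{L/X}$ on $L\cong\PP^1$ has rank $N-1-r$ and degree $(N-1)-\abs{\bm d}$; the inequality $2N-2-r\geq\abs{\bm d}$ says exactly that this degree is $\geq-(N-1-r)$, so a \emph{balanced} bundle of this rank and degree has all summands $\geq-1$ and hence $H^1=0$. It thus suffices to realize $(L,X)$ with $X$ smooth along $L$ and $N_{L/X}$ balanced. Taking $L=(Z_1=\dots=Z_{N-1}=0)$ and $h^i=\sum_j Z_j g^i_j$, \autoref{thm:ci-delh-rem} lets me prescribe the restrictions $g^i_j|_L\in H^0(L,\sO_L(d^i-1))$ freely; then smoothness of $X$ along $L$ is the open condition that the $r\times(N-1)$ matrix $(g^i_j|_L)$ have rank $r$ at every point of $\PP^1$, while $N_{L/X}=\ker\big(\delta_L(\bm h)\colon\sO_L(1)^{\oplus N-1}\to\bigoplus_i\sO_L(d^i)\big)$ being balanced is a condition that a general choice of the $g^i_j|_L$ satisfies, precisely because the governing dimension inequality $2(N-1)\geq\abs{\bm d}+r$ is the hypothesis. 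I expect this last point — that the kernel of a general such matrix is balanced — to be the main obstacle; it is the standard genericity statement for Fano schemes of lines on complete intersections (cf. \cite[V.4]{Ko}), and it can alternatively be checked by an explicit choice of monomial entries, though the bookkeeping is tight when some $d^i$ are even. Finally, the degenerate range $N-r\leq1$ is immediate, since the hypothesis then forces all $d^i=1$, so $X$ is a linear subspace and obviously contains lines. Assembling the steps, some $(L,\bm h)\in I$ has $d\pr_2$ surjective, hence $\pr_2$ is dominant and therefore surjective.
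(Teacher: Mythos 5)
Your argument is sound and, unlike the paper (whose proof of \autoref{thm:sIsH-surj} is only a pointer to \cite[Proof of Prop.\ 3.15]{Fu}), it is essentially self-contained. The structure is the standard one and each step checks out: $I$ is a tower of projective bundles over $\Gr(1,\PN)$, hence smooth, irreducible and proper over $\sH$, so dominance suffices; the tangent-space computation correctly identifies surjectivity of $d\pr_2$ at $(L,\bm h)$ with surjectivity of the untwisted map $H^0(\delta_L(\bm h))\colon H^0(L,\sO_L(1))^{\oplus N-1}\to\bigoplus_i H^0(L,\sO_L(d^i))$ (note this criterion needs no smoothness of $X$ along $L$; the interpretation via $H^1(L,N_{L/X})=0$ is the only place smoothness enters, and it is dispensable); and the numerical hypothesis $2N-2-r\geq\abs{\bm d}$ is exactly $2(N-1)\geq\abs{\bm d}+r$, the comparison of source and target dimensions. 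The only point you leave open is the existence of one tuple $(g^i_j|_L)$ making this linear map surjective, and you are right both that this is where the content lies and that the bookkeeping is delicate when several $d^i$ are even: the naive assignment of disjoint blocks of indices $j$ to the factors $i$ needs $\sum_i\lceil(d^i+1)/2\rceil$ indices, which can exceed $N-1$. The fix is to let consecutive factors share an index in a staircase pattern (exactly as in the paper's \autoref{thm:ex-7222}, where $h^1=SZ_1+TZ_2+\cdots$, $h^2=SZ_2+TZ_3+\cdots$); with that overlap the count closes and the verification is an elementary row reduction valid in any characteristic. I would also caution against invoking the ``standard genericity statement for Fano schemes'' here, since that statement is usually \emph{deduced} from the surjectivity (or smoothness) of $\pr_2$ being proved; the explicit monomial witness avoids any circularity. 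With that witness written out, your proof is complete.
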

\begin{proof}
  It holds, for example, in a similar way to \cite[Proof of Prop. 3.15]{Fu}.
\end{proof}

\begin{defn}
  We denote by $J$
  the set of pairs $(L, \bm h) \in I$
  such that the linear map $H^0(\delta_L(\bm h) (-1))$ is \emph{not} surjective.
\end{defn}

Assume that a complete intersection $X \subset \PN$ defined by $\bm h \in \sH$
is smooth along a line $L$.
Then $(L, \bm h) \in J$ if and only if $L$ is non-free in $X$,
because of \autoref{thm:equiv-surj-nonfree}.

\begin{thm}\label{thm:surj-J-to-H}
  Assume that $2N-2-r \geq \abs{\bm d}$, $\prod_{\RNi}d^i > 2$, and all $d^i > 1$.
  Then
  $\pr_2|_{J}: J \rightarrow \sH$ is surjective.
\end{thm}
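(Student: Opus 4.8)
The goal is to show that every complete intersection $X$ of type $\bm d$ (with all $d^i>1$ and $\prod d^i>2$, assuming enough room $2N-2-r\geq|\bm d|$) carries a non-free line, i.e. that $\pr_2|_J\colon J\to\sH$ hits every point of $\sH$. The natural strategy, following the parameter-space philosophy of \cite{BV}, \cite{DM}, \cite{Fu}, \cite[V.4]{Ko}, is a dimension count on $J$ combined with a properness/semicontinuity argument. First I would fix a line $L$ and analyze the fiber $J_L$ of $J\to\Gr(1,\PN)$: by \autoref{thm:ci-delh-rem}, non-freeness of $L$ in $X$ is exactly the condition that the $N-1$ vectors $\sum_i h^i_{Z_j}|_L\cdot\epsilon^i$ ($1\leq j\leq N-1$) fail to span $\bigoplus_i H^0(L,\sO(d^i-1))\cdot\epsilon^i$, a space of dimension $|\bm d|$. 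Since each $h^i_{Z_j}|_L$ depends only on the coefficients of $h^i$ and is, up to the containment conditions defining $I$, a free linear parameter, this "non-maximal-rank-of-an-$(N-1)\times|\bm d|$ matrix" locus is a determinantal subvariety of the fiber $I_L$, and one computes its codimension in $I_L$ to be $2N-2-r-|\bm d|+1$ (the generic determinantal codimension $(m-t+1)(n-t+1)$ with $m=N-1$, $n=|\bm d|$, $t=|\bm d|$, as long as $N-1\geq|\bm d|$, which is weaker than the hypothesis). Hence $\dim J_L = \dim I_L - (2N-2-r-|\bm d|+1)$, and globalizing over $\Gr(1,\PN)$ gives $\dim J = \dim\sH - 1$, i.e. $J$ has the expected codimension $1$ in $\sH$.

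**Second step.** With the dimension count in hand, I would upgrade it to surjectivity. The clean route is to invoke the construction of \autoref{sec:constr-expect-pairs}: under the present hypotheses that section produces a particular pair $(L_0,\bm h_0)\in J$ at which the space of non-free lines in the corresponding $X_0$ is smooth and of the expected dimension, equivalently at which $\pr_2|_J$ is, fiber-dimensionally, as small as possible. Concretely, smoothness of $J$ at $(L_0,\bm h_0)$ together with $\dim J=\dim\sH-1$ and the fiber of $\pr_2|_J$ over $\bm h_0$ having the expected dimension $\dim J - \dim\sH + 1 = 0$ (or at any rate the minimal possible value) forces $\pr_2|_J$ to be dominant; then, since $\sH$ is irreducible, the image is a dense constructible subset of $\sH$. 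To promote "dense" to "everything," I would use properness: $\Gr(1,\PN)$ is projective, so $\pr_2\colon I\to\sH$ is proper, and $J\subset I$ is closed (it is cut out inside $I$ by the vanishing of the maximal minors of the matrix of $H^0(\delta_L(\bm h)(-1))$, a determinantal — hence closed — condition), so $\pr_2|_J$ is proper; a proper dominant morphism to an irreducible variety is surjective. That yields $\pr_2|_J(J)=\sH$.

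**Main obstacle.** The delicate point is the dimension count on the fiber $J_L$: one must check that the determinantal locus defined by the non-surjectivity of $H^0(\delta_L(\bm h)(-1))$ really has the expected codimension inside $I_L$, i.e. that the linear-algebra parameters $\{h^i_{Z_j}|_L\}$ are "generic enough" as $\bm h$ ranges over the fiber $I_L$. This is where the explicit coordinate description of \autoref{thm:ci-delh} and the exact-sequence computation of \autoref{thm:codimsIsH} do the work — one writes each $h^i = \sum_j h^i_j Z_j$ and sees that the $h^i_j|_L$ range over all of $H^0(L,\sO(d^i-1))$ independently, so that the relevant matrix is a generic $(N-1)\times|\bm d|$ matrix of linear forms and standard determinantal codimension bounds apply. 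One also has to be careful in characteristic $2$, where the partial-derivative map $h\mapsto(h_{Z_1},\dots,h_{Z_{N-1}})$ can behave degenerately (see \autoref{thm:rem-ch2}); handling that case correctly is precisely the reason \autoref{sec:constr-expect-pairs} constructs an explicit witness pair rather than arguing by pure genericity. Granting the constructions of that section, the remainder — determinantal codimension, properness of $\pr_2|_J$, and "proper $+$ dominant $\Rightarrow$ surjective" over the irreducible $\sH$ — is routine.
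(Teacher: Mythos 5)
Your overall skeleton for the range $\abs{\bm d}\leq N-2$ --- an explicit witness pair from \autoref{sec:constr-expect-pairs} whose fiber $J_{\bm h}$ has the expected dimension, combined with a determinantal lower bound on $\dim J$ and properness of $\pr_2|_J$ --- is the paper's argument. But your dimension count is wrong, and the error is not cosmetic. The formula $(m-t+1)(n-t+1)$ with $m=N-1$, $n=t=\abs{\bm d}$ gives codimension $N-\abs{\bm d}$, not $2N-2-r-\abs{\bm d}+1$; hence every component of $J$ has dimension at least $\dim I-(N-\abs{\bm d})=\dim\sH+N-r-2$, which under your hypotheses is strictly \emph{larger} than $\dim\sH$, and the expected fiber dimension of $\pr_2|_J$ is $N-r-2$, not $0$. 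Your stated conclusion $\dim J=\dim\sH-1$ would force the image of $\pr_2|_J$ to be a proper subvariety and would \emph{disprove} the theorem. With the corrected numbers the argument does close up: \autoref{thm:ci-constr-statement} produces $(L_0,\bm h_0)$ at which the fiber is smooth of dimension exactly $N-r-2$, so (by upper semicontinuity of fiber dimension, or the paper's tangent-space computation using \autoref{thm:JinI-codim} and \ref{eq:dimI-dimH}) the component through $(L_0,\bm h_0)$ dominates $\sH$, and properness finishes. Note also that only a \emph{lower} bound on $\dim J$ is needed, so the genericity discussion in your ``main obstacle'' paragraph is doing less work than you think; the real content is the existence of a point where the fiber is small, which is exactly what the explicit constructions supply.

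The second, independent gap is the range $\abs{\bm d}\geq N-1$, which your proof does not cover. The hypothesis $2N-2-r\geq\abs{\bm d}$ does \emph{not} imply $N-1\geq\abs{\bm d}$ (take $N=10$, $r=1$, $\abs{\bm d}=15$), contrary to your parenthetical claim, and \autoref{thm:ci-constr-statement} is only available for $\abs{\bm d}\leq N-2$. For $\abs{\bm d}\geq N$ the $(N-1)\times\abs{\bm d}$ matrix can never have rank $\abs{\bm d}$, so $J=I$ and surjectivity is \autoref{thm:sIsH-surj} (this is \autoref{thm:J=I}). The genuinely delicate boundary case $\abs{\bm d}=N-1$ requires a different idea: the paper's \autoref{thm:etale} argues that if some $\bm h$ avoided $\pr_2(J)$ then every line in a general such $X$ would be free with $N_{L/X}=\sO_L^{\oplus N-1}$, making the universal family of lines \'etale over $X$, contradicting simple connectedness of $X$ together with the fact that several lines pass through a general point. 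Your proposal contains no substitute for this step.
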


\begin{rem}\label{thm:J=I}
  If $N \leq \abs{\bm d}$, then we have $J = I$.
  This immediately follows from \autoref{thm:ci-delh-rem}.
  Thus $J \rightarrow \sH$ is surjective, because of \autoref{thm:sIsH-surj}.
\end{rem}

We denote by $J_{\bm h} := \pr_1(\pr_2^{-1}(\bm h) \cap J) \subset \Gr(1, \PN)$.
In the case of $\abs{\bm d} \leq N-2$,
the above theorem follows from:

\begin{prop}\label{thm:ci-constr-statement}
  Assume that $N-2 \geq \abs{\bm d}$, $\prod_{\RNi}d^i > 2$, and all $d^i > 1$.
  Then there exists a pair $(L, \bm h) \in J$ such that
  $J_{\bm h}$ is smooth and of dimension $N-r-2$
  at the point $L \in \Gr(1, \PN)$.
\end{prop}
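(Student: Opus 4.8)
The plan is to reduce the statement to an explicit tangent space estimate at one carefully chosen point. First, an unconditional bound: for any $(L,\bm h)\in I$ the Fano scheme $\pr_2^{-1}(\bm h)\subset\Gr(1,\PN)$ of lines on $X$ is cut out near $L$ by the $\abs{\bm d}+r$ equations $h^i|_{L'}=0$ in $\Gr(1,\PN)$, so $\dim_L\pr_2^{-1}(\bm h)\geq 2N-2-\abs{\bm d}-r$; and $J_{\bm h}$ is cut out inside $\pr_2^{-1}(\bm h)$ by the vanishing of the maximal minors of the $\abs{\bm d}\times(N-1)$ matrix $M(L')$ representing $H^0(\delta_{L'}(\bm h)(-1))$, a determinantal condition of codimension at most $N-\abs{\bm d}$. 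Hence $\dim_L J_{\bm h}\geq N-r-2$ automatically. It therefore suffices to produce $(L,\bm h)\in J$ at which the cokernel of $H^0(\delta_L(\bm h)(-1))$ is one-dimensional and $\dim_K T_L J_{\bm h}\leq N-r-2$; smoothness and the dimension claim follow.

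Second, the construction. Take $L=(Z_1=\dots=Z_{N-1}=0)$ and choose $\bm h$ with each $h^i=\sum_{j=1}^{N-1}Z_j\,g^i_j+\sum_{j\leq k}Z_jZ_k\,g^i_{jk}+(\text{terms of }Z\text{-degree}\geq 3)$ for forms $g^i_j\in H^0(\PP^1,\sO(d^i-1))$ and $g^i_{jk}\in H^0(\PP^1,\sO(d^i-2))$ (the latter available precisely because all $d^i>1$). The $g^i_j$ are chosen so that the $N-1$ vectors $v_j=(g^1_j,\dots,g^r_j)$ span a subspace $W\subset V:=\bigoplus_i H^0(\PP^1,\sO(d^i-1))$ of codimension exactly $1$ — possible since $N-1>\abs{\bm d}=\dim V$ — and, crucially, so that for every $p\in L$ the $r\times(N-1)$ matrix $(g^i_j(p))$ has rank $r$. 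The existence of such a configuration is an elementary linear-algebra fact, and it is exactly here that $\prod_{\RNi}d^i>2$ enters: when $r=1$ and $d^1=2$ every codimension-$1$ subspace of $V=H^0(\PP^1,\sO(1))$ is spanned by a single linear form and so has a base point. By \autoref{thm:ci-delh-rem} and \autoref{thm:equiv-surj-nonfree}, the rank-$r$ condition makes $X$ smooth along $L$, and the codimension-$1$ condition puts $(L,\bm h)$ in $J$ with one-dimensional cokernel; equivalently $N_{L/X}\cong\sO_L(-1)\oplus\bigoplus_j\sO_L(b_j)$ with all $b_j\geq 0$, so $H^1(L,N_{L/X})=0$ and $\pr_2^{-1}(\bm h)$ is smooth of dimension $2N-2-\abs{\bm d}-r$ at $L$.

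Third, the tangent space. Work in the affine chart of $\Gr(1,\PN)$ around $L$ where a nearby line $L'$ is the image of $(S:T)\mapsto(S:T:\ell_1:\dots:\ell_{N-1})$ with $\ell_j=\alpha_jS+\beta_jT$, so the chart has coordinates $(\alpha_j,\beta_j)_j$ and $L$ is the origin. Then $\pr_2^{-1}(\bm h)$ is $\Fh^i:=h^i(S,T,(\ell_j)_j)=0$ ($\RNi$), whose linearization at $L$ reads $\sum_j\ell_j\,g^i_j=0$ in $H^0(\PP^1,\sO(d^i))$; these $\abs{\bm d}+r$ linear conditions are independent by the previous paragraph. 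The matrix $M(L')$ has value $M(L)=(g^i_j)$ of corank $1$ at the origin, and its variation in the direction $(\dot\alpha_j,\dot\beta_j)$ is governed by the quadratic part: $\dot M$ has entries $\sum_j(\dot\alpha_jS+\dot\beta_jT)(g^i_{jk}+g^i_{kj})$. The space $T_L J_{\bm h}$ consists of $(\dot\alpha_j,\dot\beta_j)$ satisfying the linearization above together with the determinantal tangency condition $\dot M(\ker M(L))\subseteq\im M(L)$; since $\operatorname{coker}M(L)$ is one-dimensional and $\ker M(L)$ has dimension $N-\abs{\bm d}$, this is at most $N-\abs{\bm d}$ further linear conditions. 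As the first batch has rank $\abs{\bm d}+r$, the total rank is at most $N+r$, so $\dim_K T_L J_{\bm h}\geq N-r-2$, with equality — hence the proposition — if and only if the $N-\abs{\bm d}$ determinantal conditions are independent modulo the first batch, i.e. restrict to independent functionals on $T_L\pr_2^{-1}(\bm h)\cong H^0(L,N_{L/X})$.

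Fourth, this last independence is the heart of the matter, and it is where the freedom in the quadratic forms $g^i_{jk}$ is spent. One fixes one explicit admissible $(g^i_j)$ — chosen in sufficiently general position subject to the two constraints of the construction, so that $\ker M(L)$ and a basis of $H^0(L,N_{L/X})$ are themselves generic — computes them explicitly, and then chooses a convenient $(g^i_{jk})$ so that the bilinear pairing $\ker M(L)\times H^0(L,N_{L/X})\to\operatorname{coker}M(L)$ induced by $\dot M$ is nondegenerate in the first variable, reducing the claim to the non-vanishing of one explicit determinant checked by direct inspection (it is convenient to separate the cases $r=1$, which forces $d^1\geq 3$, and $r\geq 2$). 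I expect this coupled choice to be the main obstacle, since $\ker M(L)$, $H^0(L,N_{L/X})$ and the pairing all move with $\bm h$. The one genuine subtlety is characteristic $2$: several of the determinants governing this step vanish identically there, and one must adjust $(g^i_{jk})$ — or the higher $Z$-degree terms of $\bm h$ — accordingly, the phenomenon flagged in \autoref{thm:rem-ch2}.
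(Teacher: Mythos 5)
Your framework is sound and is in fact the same one the paper uses: work in the affine chart $\Go_1$, note that $J_{\bm h}$ is cut out by the $\abs{\bm d}+r$ equations $f^i_k$ together with the maximal minors of $M(\bm h)$, observe that the determinantal codimension bound forces $\dim_L J_{\bm h}\geq N-r-2$, and then reduce everything to showing that at one well-chosen corank-one pair $(L,\bm h)$ the Jacobian matrix \ref{eq:def-matDFG} has full rank $\abs{\bm d}+r+(N-\abs{\bm d})=N+r$. Your reformulation of that last condition as nondegeneracy in the first variable of the pairing $\ker M(L)\times H^0(L,N_{L/X})\to\operatorname{coker}M(L)$ is correct, as are the preliminary reductions (corank one forces $N_{L/X}\cong\sO_L(-1)\oplus\bigoplus\sO_L(b_j)$ with $b_j\geq 0$, hence $\Fh$ smooth of the expected dimension at $L$).

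The problem is that the proof stops exactly where the actual work begins. Steps one through three are soft; the entire content of the proposition is your step four, and there you only describe what a construction would have to achieve (``one fixes one explicit admissible $(g^i_j)$ \dots chooses a convenient $(g^i_{jk})$ \dots reducing the claim to the non-vanishing of one explicit determinant''), concede that you expect this to be ``the main obstacle,'' and leave the characteristic-two difficulty unresolved. No admissible $(g^i_j,g^i_{jk})$ is exhibited, and no determinant is computed, so the existence assertion is not established. This is not a routine genericity argument one can wave at: the kernel of $M(L)$, the basis of $H^0(L,N_{L/X})$, and the pairing all depend on the same data $\bm h$, and the paper's proof consists precisely of resolving this coupling by explicit choices --- the polynomials $h=\sum_j(c_jS^{d-1}-S^{d-j-1}T^j)Z_j+\tilde h$ of \autoref{thm:hypsurf-constr} and \autoref{thm:ci-constr} (with $\tilde h$ a sum of products $Z_jZ_k$ rather than squares, split into the $N-\abs{\bm d}$ even and odd cases, exactly to survive characteristic $2$; cf.\ \autoref{thm:rem-ch2}), and the quadric systems of \autoref{thm:exp-2..2} --- followed by the row-reduction of $M(\bm h)$ to identify the minors $g_1,\dots,g_{N-\abs{\bm d}}$ and the verification that $\Da(g_l)_L,\Db(g_l)_L$ are independent of the $\Da(f^i_k)_L,\Db(f^i_k)_L$. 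Until you supply such an explicit pair and carry out that rank computation (in all characteristics and for all admissible $(\bm d,N,r)$), the proof is incomplete.
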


Our goal is to construct
an expected pair $(L, \bm h)$, satisfying the statement of \autoref{thm:ci-constr-statement}.
In the next subsection, as a preparation,
we give defining polynomials of $J_{\bm h}$ on an affine open subset of $\Gr(1, \PN)$.

\subsection{Defining polynomials of the space of non-free lines}

Let us fix $(s:t)$ as the homogeneous coordinates on $\PP^1$, and 
$(S:T:Z_1:Z_2:\dots:Z_{N-1})$
on $\PN$.
We set $\Go_1 \subset \Gr(1, \PN)$ to be the standard open subset,
which is the set of lines $L$ such that $L \cap (S = T = 0) = \emptyset$.
Here $\Go_1 \simeq \A^{2(N-1)}$, which maps
$L \in \Go_1$ to
\begin{equation*}
  A_L =
  \begin{bmatrix}
    a_1, \dots, a_{N-1}
    \\b_1, \dots, b_{N-1}
  \end{bmatrix}
  \in \A^{2(N-1)}
\end{equation*}
if the line $L \subset \PN$ is equal to
the image of the morphism $\PP^1 \rightarrow \PN$ defined by
\begin{equation}\label{eq:line-aff-param}
  (s:t) \mapsto \begin{bmatrix} s & t  \end{bmatrix} \cdot \begin{bmatrix} E_2 & A_L  \end{bmatrix}
  = (s:t:sa_1+tb_1:\dots:sa_{N-1}+tb_{N-1}),
\end{equation}
where $E_2$ is the unit matrix of size $2$.
We set
\begin{equation*}
  \xi: \Go_1 \times \PP^1 \rightarrow \PN
\end{equation*}
to be the morphism which sends
$(L, (s:t)) \mapsto \begin{bmatrix} s & t  \end{bmatrix} \cdot \begin{bmatrix} E_2 & A_L  \end{bmatrix}$.

Let $X \subset \PN$ be a complete intersection
defined by homogeneous polynomials $\bm h = (h^1, \dots, h^r) \in \sH$.
The composite functions $h^i\circ \xi$ is regarded as a polynomial with variables
$s,t, a_1, \dots, a_{N-1}, b_1, \dots, b_{N-1}$.
Then $h^i\circ \xi$ is written as
\[
f^i_{0} s^{d^i} + f^i_1 s^{d^i-1}t + \dots + f^i_{d^i} t^{d^i},
\]
with some $f^i_0, f^i_1, \dots, f^i_{d^i} \in K[a_1, \dots, a_{N-1}, b_1, \dots, b_{N-1}]$.
Here $L \subset X$ if and only if
\[
f^i_k(A_L) = 0
\ \text{ for any $\RNi$ and $\RNk$.}
\]
Let $\Fh := \pr_1(\pr_2^{-1}(\bm h) \cap I) \subset \Gr(1, \PN)$,
which is equal to the set of lines lying in $X$.
Then $\Fh \cap \Go_1$ is equal to the zero set of $(r + \abs{\bm d})$-polynomials $f^i_k$'s in
the affine space $\Go_1 \simeq \A^{2(N-1)}$.

\begin{rem}\label{thm:deg-of-FXx}
  For a point $x$, we denote by $F_{\bm h, x}$ the set of $L \in \Fh$ such that
  $x \in L$.
  Let $x = (1: 0: \dots: 0)$. Then $F_{\bm h, x} \cap \Go_1$ is equal to the zero set of
  $f^i_k$'s and $a_1, a_2, \dots, a_{N-1}$ in $\Go_1 \simeq \A^{2(N-1)}$.
\end{rem}

For a homogeneous polynomial $h \in K[S,T,Z_1, \dots, Z_{N-1}]$ of degree $d$
with $h \circ \xi = f_0 s^d + f_1 s^{d-1}t + \dots + f_d t^d$,
we have
\begin{equation}\label{eq:diffF-1}
  \begin{aligned}
    (h \circ \xi)_{a_j} &= f_{0, a_j} s^d + f_{1, a_j} s^{d-1}t + \dots + f_{d, a_j} t^{d},
    \\
    (h \circ \xi)_{b_j} &= f_{0, b_j} s^d + f_{1, b_j} s^{d-1}t + \dots + f_{d, b_j} t^{d},
  \end{aligned}
\end{equation}
where $(h \circ \xi)_{a_j} := \diff{(h \circ \xi)}{a_j}$ and so on.
In addition, it follows that
\begin{equation}\label{eq:diffF-2}
  (h\circ \xi)_{a_j} = s \cdot (h_{Z_j}\circ \xi) \textand
  (h\circ \xi)_{b_j} = t \cdot (h_{Z_j}\circ \xi).
\end{equation}
This is shown as follows:
We write the morphism
$\xi$ by the coordinates $(\eta_0:\eta_1:\xi_1: \dots: \xi_{N-1})$,
which are given as in the right hand side of \ref{eq:line-aff-param}.
Then
$(h\circ \xi)_{a_j}$ is equal to $\diff{\eta_0}{a_j} \cdot (\diff{h}{S})\circ \xi + \diff{\eta_1}{a_j} \cdot (\diff{h}{T})\circ \xi + \sum_{1 \leq l \leq N-1} \diff{\xi_l}{a_j} \cdot (\diff{h}{Z_l})\circ \xi  = s \cdot (h_{Z_j}\circ \xi)$. Similarly, $(h\circ \xi)_{b_j}$ is also obtained.

We denote by $v(h_{Z_j}\circ\xi)$ the row vector of size $d$
which represents $h_{Z_j}\circ \xi$ with respect to
the basis
$s^{d-1}, s^{d-2}t, \dots, t^{d-1}$
of $H^0(\PP^1, \sO(d-1))$, i.e.,
\[
h_{Z_j}\circ \xi = v(h_{Z_j}\circ\xi) \cdot 
\trans\begin{bmatrix}
  s^{d-1}& s^{d-2}t& \dots& t^{d-1}
\end{bmatrix}.
\]
Each entry of $v(h_{Z_j}\circ\xi)$ is
a polynomial with variables $a_1, \dots, a_{N-1}, b_1, \dots, b_{N-1}$.

Now, for $\bm h = (h^1, \dots, h^r)$,
let us consider the $(N-1) \times \abs{\bm d}$ matrix
\begin{equation}\label{eq:defMhZ}
  M(\bm h) := \begin{bmatrix}
    v(h^1_{Z_1}\circ\xi) & \cdots & v(h^r_{Z_1}\circ\xi)\\
    \vdots&&\vdots\\
    v(h^1_{Z_{N-1}}\circ\xi) & \cdots & v(h^r_{Z_{N-1}}\circ\xi)
  \end{bmatrix},
\end{equation}
where it follows from \autoref{thm:ci-delh-rem} that $M(\bm h)(A_L)$ is of rank $\abs{\bm d}$ if and only if
$H^0(\delta_L(\bm h)(-1))$  is surjective.
Then
$J_{\bm h} \cap \Go_1$ is the zero set in $\Fh \cap \Go_1$ of the polynomials given
as the $\abs{\bm d} \times \abs{\bm d}$ minors of $M(\bm h)$.

For a polynomial $f$ with varieties $a_1, \dots, a_{N-1}, b_1, \dots, b_{N-1}$,
we set
\begin{gather*}
  \Da(f)_L := \begin{bmatrix}
    \matExa{f}{a}
  \end{bmatrix},
  \\
  \Db(f)_L := \begin{bmatrix}
    \matExa{f}{b}.
  \end{bmatrix}.
\end{gather*}
We denote by $\ee_i = \begin{bmatrix}
  e_{i,1} & e_{i,2} & \dots & e_{i,N-1}
\end{bmatrix}$
the row vector of size $N-1$ such that $e_{i,i} = 1$ and $e_{i,j} = 0$ for $i \neq j$,
and by $\bm 0$ the zero vector of size $N-1$.

\begin{rem}\label{thm:rk-matrixFG}
  Let $g_1, \dots, g_m$ be a minimal subset of the $\abs{\bm d} \times \abs{\bm d}$ minors of $M(\bm h)$
  such that $J_{\bm h} \cap \Go_1$ is locally defined in $\Go_1 \simeq \A^{2(N-1)}$ around $L$
  by the $\abs{\bm d}+r+m$ polynomials $g_1, \dots, g_m$ and $f^i_k$ ($\RNi,\, \RNk$). 
  We consider the $(\abs{\bm d}+r+m) \times 2(N-1)$ matrix
  \begin{equation}\label{eq:def-matDFG}
    \begin{bmatrix}
      \begin{pmatrix}
      \vdots & \vdots
      \\
      \Da(f^i_k) & \Db(f^i_k)
      \\
      \vdots & \vdots
    \end{pmatrix}
 \\
 \begin{pmatrix}
      \Da(g_1) & \Db(g_1)
      \\
      \vdots & \vdots
      \\
      \Da(g_{m}) & \Db(g_{m})
    \end{pmatrix}
    \end{bmatrix}_L.
  \end{equation}
  Then $J_{\bm h}$ is smooth at $L$ if and only if
  the rank of the above matrix \ref{eq:def-matDFG} is equal to $\abs{\bm d}+r+m$.
  We note that the rank can be calculated
  by using formulae \ref{eq:diffF-1}, \ref{eq:diffF-2}, \ref{eq:defMhZ}.
\end{rem}

\begin{lem}\label{thm:JinI-codim}
  Every irreducible component of $J$ is of codimension $\leq N-\abs{\bm d}$ in $I$.
\end{lem}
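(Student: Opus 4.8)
The plan is to recognise $J$, locally on $I$, as the zero locus of the maximal minors of the matrix $M(\bm h)$ of \ref{eq:defMhZ}, and then to appeal to the classical codimension bound for determinantal ideals.

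If $N \le \abs{\bm d}$, then $J = I$ by \autoref{thm:J=I}, so we may assume $\abs{\bm d} \le N-1$. First I would cover $\Gr(1, \PN) \times \sH$ by the standard affine charts $\Go_i \times \sH$ (of which $\Go_1 \times \sH$ is the one already set up in the text; the others are identical), and, since every irreducible component of $J$ meets some chart, it suffices to bound in $I$ the codimension of the components of $J \cap (\Go_1 \times \sH)$. On $I \cap (\Go_1 \times \sH)$ the entries of the $(N-1) \times \abs{\bm d}$ matrix $M(\bm h)$ are regular functions --- polynomials in $a_1, \dots, b_{N-1}$ and in the coefficients of $\bm h$ --- and, by \autoref{thm:ci-delh-rem}, for $(L, \bm h) \in I$ the linear map $H^0(\delta_L(\bm h)(-1))$ is surjective if and only if $M(\bm h)(A_L)$ has rank $\abs{\bm d}$. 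Hence $J \cap (\Go_1 \times \sH)$ is exactly the zero locus, inside $I \cap (\Go_1 \times \sH)$, of the $\abs{\bm d} \times \abs{\bm d}$ minors of $M(\bm h)$.

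It then remains to invoke Macaulay's theorem: over a Noetherian ring, the ideal generated by the $t \times t$ minors of an $m \times n$ matrix has height at most $(m-t+1)(n-t+1)$ whenever it is a proper ideal. Applied in the local ring $\sO_{I, \eta}$, with $\eta$ the generic point of a component of $J$ in the chosen chart and with $m = N-1$, $n = t = \abs{\bm d}$, this bound reads $(N - \abs{\bm d}) \cdot 1 = N - \abs{\bm d}$; since the height of that minor ideal in $\sO_{I,\eta}$ equals the codimension in $I$ of the component, the lemma follows. Equivalently, one may view $J$ as the degeneracy locus where a morphism of vector bundles on $I$ of ranks $N-1$ and $\abs{\bm d}$ --- the family of the maps $H^0(\delta_L(\bm h)(-1))$ --- has rank $\le \abs{\bm d} - 1$, and quote the standard codimension estimate for degeneracy loci.

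I do not expect a real obstacle: the argument is essentially a citation once $J$ is identified with a determinantal locus. The only points needing a word of care are that this identification holds for the whole family over $\sH$, not merely fibrewise (clear from \autoref{thm:ci-delh-rem}, the entries of $M(\bm h)$ being regular on all of $I$), and that Macaulay's bound is being applied over the scheme $I$, which may well be singular or non-reduced --- harmless, since the bound is valid over an arbitrary Noetherian ring.
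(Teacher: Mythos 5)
Your argument is correct and is essentially the paper's own proof: both identify $J$ locally on the chart $\Go_1 \times \sH$ as the rank-$\le \abs{\bm d}-1$ degeneracy locus of the $(N-1)\times\abs{\bm d}$ matrix $M(\bm h)$ and then invoke the standard determinantal codimension bound $(m-t+1)(n-t+1) = N-\abs{\bm d}$. The only difference is the citation --- the paper pulls back the determinantal variety $M_{\abs{\bm d}-1}$ under $\Phi:(L,\bm h)\mapsto M(\bm h)(A_L)$ and quotes the degeneracy-locus proposition from ACGH, whereas you apply the Macaulay/Eagon--Northcott height bound directly in $\sO_{I,\eta}$; these are the same estimate.
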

\begin{proof}
  We consider the morphism
  $\Phi: I \cap (\Go_1 \times \sH) \rightarrow \A^{(N-1)d}$ which sends $(L, \bm h) \mapsto M(\bm h)(A_L)$.
  Let $M_{\abs{\bm d}-1} \subset \A^{(N-1)d}$ the locus of matrix of rank $\leq {\abs{\bm d}-1}$.
  Then $J \cap (\Go_1 \times \sH)$ is equal to $\Phi^{-1}(M_{\abs{\bm d}-1})$.
  Since $M_{\abs{\bm d}-1}$ is of codimension $N-\abs{\bm d}$ in $\A^{(N-1)d}$ (see \cite[p. 67, II, \textsection 2, Prop.]{ACGH}),
  $J \cap (\Go_1 \times \sH)$ is of codimension $\leq N-\abs{\bm d}$ in $I$,
  which implies the assertion.
\end{proof}

\section{Construction of expected pairs}
\label{sec:constr-expect-pairs}

\subsection{Hypersurfaces}

In this subsection, we assume $r=1$
and give construction of pairs satisfying the statement of \autoref{thm:ci-constr-statement}.

\begin{rem}\label{thm:psi-zero}
  Let $(L, h)\in \Gr(1, \PN) \times \sHd$ satisfies $h|_L = 0$.
  Then the linear map $H^0(\delta_L(h)(-1)): K^{\oplus N-1} \rightarrow H^0(L, \sO(d-1))$
  given in \autoref{thm:ci-delh} is not surjective if and only if
  \[
  \psi(h_{Z_1}|_L) = \dots = \psi(h_{Z_{N-1}}|_L) = 0
  \]
  holds for some linear functional
  $\psi: H^0(\PP^1, \sO(d-1)) \rightarrow K$.
  A linear functional $\psi$ is represented by the row vector
  \begin{equation}\label{eq:psi-c}
    \begin{bmatrix}
      c_0 & c_1 & \cdots & c_{d-1}
    \end{bmatrix} \quad (c_0, \dots c_{d-1} \in K);
  \end{equation}
  this means that $\psi(s^{d-1-k}t^k) = c_k$ for each $k$.
\end{rem}

First we consider the case $(d, N) =  (4, 6)$. Let $(S: T: Z_1: \dots: Z_5)$
be the homogeneous coordinates on $\PP^6$, and $(s:t)$ be on $\PP^1$.

\begin{ex}\label{thm:hypsurf-4,6}
  Let $\psi: H^0(\PP^1, \sO(3)) \rightarrow K$ be a general linear functional,
  where we assume that $c_0 = 1$ in the expression \ref{eq:psi-c}.
  Let $X \subset \PP^6$
  be a hypersurface defined by the following homogeneous polynomial of degree $4$,
  \[
  h := h_1Z_1 + h_2Z_2 + h_3Z_3 + T^2Z_4Z_5,
  \]
  where we set
  \[
  h_j := c_jS^{3} - S^{3-j}T^{j} \quad (j = 1,2,3).
  \]
  Let $L = (Z_1 = \dots Z_5 = 0) \subset \PP^6$.
  Then $(L, h) \in J$, and
  $J_h$ is smooth and of dimension $N-3=3$ at the point $L \in \Gr(1, \PP^6)$.
  The reason is as follows.

  It follows that $h_{Z_j}\circ \xi = h_j \circ \xi = c_j s^3 - s^{3-j}t^j$ ($j = 1,2,3$).
  In particular, we find $(L, h) \in J$ as in \autoref{thm:psi-zero}.
  From \ref{eq:diffF-1}, \ref{eq:diffF-2},
  since $s \cdot h_{Z_j}\circ \xi = c_j s^4 - s^{4-j}t^j$, it follows that
  $f_{0,a_j} = c_j$, $f_{j,a_j} = -1$, and other $f_{j', a_j} = 0$.
  In addition, since $t \cdot h_{Z_j}\circ \xi = c_j s^3t - s^{3-j}t^{j+1}$, it follows that
  $f_{1,b_j} = c_j$, $f_{j+1,b_j} = -1$, and other $f_{j', b_j} = 0$.
  Hence,  we have
  \begin{equation*}
    \begin{bmatrix}
      D_a(f_0)\\
      D_a(f_1)\\
      D_a(f_2)\\
      D_a(f_3)\\
      D_a(f_4)
    \end{bmatrix}_L
    =
    \begin{bmatrix}
      c_1\ee_1 + c_2\ee_2 + c_{3}\ee_{3}
      \\
      -\ee_1
      \\
      -\ee_2
      \\
      -\ee_{3}
      \\
      \bm{0}
    \end{bmatrix},\
    \begin{bmatrix}
      D_b(f_0)\\
      D_b(f_1)\\
      D_b(f_2)\\
      D_b(f_3)\\
      D_b(f_4)
    \end{bmatrix}_L
    =
    \begin{bmatrix}
      \bm{0}
      \\
      c_1\ee_1 + c_2\ee_2 + c_{3}\ee_{3}
      \\
      -\ee_1
      \\
      -\ee_2
      \\
      -\ee_{3}
    \end{bmatrix}.
  \end{equation*}
  Here,
  $\begin{bmatrix}
    D_a(f_0)
    &
    D_b(f_0)
  \end{bmatrix}_L$
  is transformed to
  \[
  \begin{bmatrix}
    \bm{0}
    &
    (c_1^2-c_2) \ee_1 + (c_1c_2-c_3)\ee_2
  \end{bmatrix}
  \]
  under elementary row operations of
  $\begin{bmatrix}
    D_a(f_k)
    &
    D_b(f_k)
  \end{bmatrix}_L$
  with $1 \leq k \leq 4$.

  On the other hand, from \ref{eq:defMhZ}, we have
  \begin{equation}\label{eq:Mh-46}
    M(h) =
    \begin{bmatrix}
      c_1 & -1 \\
      c_2 && -1 \\
      c_3 &&& -1\\
      &&a_5& b_5\\
      &&a_4& b_4\\
    \end{bmatrix}
    \sim
    \begin{bmatrix}
      0 & -1 \\
      0 && -1 \\
      0 &&& -1\\
      c_2a_5+c_3b_5 && a_5& b_5\\
      c_2a_4+c_3b_4 &&a_4& b_4\\
    \end{bmatrix}
  \end{equation}
  where the right hand side is
  a transformation
  under elementary column operations, and
  where we consider blank entries in the matrix to be filled by $0$.
  Here
  $\rk M(h) < 4$
  if and only if $4 \times 4$ minors
  $g_1 = c_2a_4+c_3b_4$ and $g_2 = c_2a_5+c_3b_5$ are equal to zero.
  Now we have
  \[
  \begin{bmatrix}
    D_a(g_1)
    \\
    D_a(g_2)
  \end{bmatrix}_L
  =
  \begin{bmatrix}
    c_2\ee_4
    \\
    c_2\ee_5
  \end{bmatrix},\
  \begin{bmatrix}
    D_b(g_1)
    \\
    D_b(g_2)
  \end{bmatrix}_L
  =
  \begin{bmatrix}
    c_3\ee_4
    \\
    c_3\ee_5
  \end{bmatrix}.
  \]

  Hence the matrix \ref{eq:def-matDFG} in \autoref{thm:rk-matrixFG}
  is of rank $7$,
  which implies that
  $J_h \cap \Go_1$, the zero set of $7$ polynomials $f_0, \dots, f_4, g_1, g_2$,
  is smooth and of codimension $7$ in $\Go_1 \simeq \A^{10}$ at $L$.
\end{ex}

Recall that
$(S:T:Z_1:\dots:Z_{N-1})$ are the homogeneous coordinates on $\PN$.

\begin{ex}\label{thm:hypsurf-constr}
  Assume that $d \geq 3$ and $d \leq N-2$.
  Let $\psi: H^0(\PP^1, \sO(d-1)) \rightarrow K$ be a general linear functional
  with $c_0 = 1$ in \ref{eq:psi-c}.
  Let $X \subset \PN$
  be a hypersurface of degree $d$
  defined by the homogeneous polynomial
  $h := h_1Z_1 + h_2Z_2 + \dots + h_{d-1}Z_{d-1} + \tilde{h}$,
  where
  $h_j := c_{j}S^{d-1} - S^{d-j-1}T^{j} \quad (1 \leq j \leq d-1)$,
  and where
  \[
  \tilde h := 
  \begin{cases}
    T^{d-2}(Z_{d}Z_{d+1} + Z_{d+2}Z_{d+3}\dots + Z_{N-2}Z_{N-1})
    &\text{if $N-d$ is even,}
    \\
    ST^{d-3}Z_{d}Z_{d+1} + T^{d-2}(Z_{d+1}Z_{d+2} + Z_{d+3}Z_{d+4}\dots + Z_{N-2}Z_{N-1})
    &\text{if $N-d$ is odd.}
  \end{cases}
  \]
  Let $L = (Z_1 = \dots Z_{N-1} = 0)$.
  Then $(h, L) \in J$, and
  $J_h$ is smooth and of dimension $N-3$ at the point $L \in \Gr(1, \PN)$.
  It can be shown in a similar way to the previous example.
  
  We check the calculation of the dimension of $J_h$ and smoothness at $L$
  in the case where $N-d$ is odd.
  From \ref{eq:diffF-1}, \ref{eq:diffF-2}, we have
  \[
  \begin{bmatrix}
    D_a(f_0)\\
    D_a(f_1)\\
    \vdots\\
    D_a(f_{d-1})\\
    D_a(f_d)
  \end{bmatrix}_L
  =
  \begin{bmatrix}
    c_1\ee_1 + \dots + c_{d-1}\ee_{d-1}
    \\
    -\ee_1
    \\
    \vdots
    \\
    -\ee_{d-1}
    \\
    \bm{0}
  \end{bmatrix},\
  \begin{bmatrix}
    D_b(f_0)\\
    D_b(f_1)\\
    \vdots\\
    D_b(f_{d-1})\\
    D_b(f_d)
  \end{bmatrix}_L
  =
  \begin{bmatrix}
    \bm{0}
    \\
    c_1\ee_1 + \dots + c_{d-1}\ee_{d-1}
    \\
    -\ee_1
    \\
    \vdots
    \\
    -\ee_{d-1}
  \end{bmatrix}.
  \]
  From \ref{eq:defMhZ},
  we have that the $(N-1) \times d$ matrix
  $M(h)$ is
  \[
  \begin{bmatrix}
    c_1 & -1 & & &
    \\ 
    \vdots && \ddots
    \\
    c_{d-3} &&& -1 \\
    c_{d-2} &&&& -1 \\
    c_{d-1} &&&&& -1
    \\
    &&& a_{d+1} & b_{d+1}
    \\
    &&& a_{d} & b_{d} + a_{d+2} & b_{d+2}
    \\
    &&&& a_{d+1} & b_{d+1}
    \\
    &&&& a_{d+4} & b_{d+4}
    \\
    &&&& a_{d+3} & b_{d+3}
    \\
    &&&&    \vdots & \vdots
    \\
    &&&&    a_{N-1} & b_{N-1}
    \\
    &&&&    a_{N-2} & b_{N-2}
  \end{bmatrix}
  \sim
  \begin{bmatrix}
    0
    \\ 
    \vdots
    \\
    0 & - E_{d-1}
    \\
    0
    \\
    0 
    \\
    c_{d-3}a_{d+1}+c_{d-2}b_{d+1}
    \\
    c_{d-3}a_{d}+c_{d-2}(b_{d} + a_{d+2}) + c_{d-1}b_{d+2}
    \\
    c_{d-2}a_{d+1}+c_{d-1}b_{d+1}
    \\
    c_{d-2}a_{d+4}+c_{d-1}b_{d+4} & *
    \\
    c_{d-2}a_{d+3}+c_{d-1}b_{d+3}
    \\
    \vdots
    \\
    c_{d-2}a_{N-1}+c_{d-1}b_{N-1} 
    \\
    c_{d-2}a_{N-2}+c_{d-1}b_{N-2} 
  \end{bmatrix}.
  \]
  Hence $M(h)$ is of rank $< d$
  if and only if the $d \times d$ minors $g_1, \dots, g_{N-d}$ are zero,
  where
  $g_1 = c_{d-3}a_{d+1}+c_{d-2}b_{d+1}$,
  $g_2 = c_{d-3}a_{d}+c_{d-2}(b_{d} + a_{d+2}) + c_{d-1}b_{d+2}$,
  $g_3 = c_{d-2}a_{d+1}+c_{d-1}b_{d+1}$,
  and
  \[
  g_4 = c_{d-2}a_{d+3}+c_{d-1}b_{d+3},\
  g_5 = c_{d-2}a_{d+4}+c_{d-1}b_{d+4},
  \dots,
  g_{N-d} = c_{d-2}a_{N-1}+c_{d-1}b_{N-1}.
  \]
  Here we have:
  \[
  \begin{bmatrix}
    D_a(g_{1})\\
    D_a(g_{2})\\
    D_a(g_{3})\\
    D_a(g_{4})\\
    \vdots\\
    D_a(g_{N-d})\\
  \end{bmatrix}_L
  =
  \begin{bmatrix}
    c_{d-3}\ee_{d+1}
    \\
    c_{d-3}\ee_{d} + c_{d-2}\ee_{d+2}
    \\
    c_{d-2}\ee_{d+1}
    \\
    c_{d-2}\ee_{d+3}
    \\
    \vdots
    \\
    c_{d-2}\ee_{N-1}
  \end{bmatrix},\
  \begin{bmatrix}
    D_b(g_{1})\\
    D_b(g_{2})\\
    D_b(g_{3})\\
    D_b(g_{4})\\
    \vdots\\
    D_b(g_{N-d})\\
  \end{bmatrix}_L
  =
  \begin{bmatrix}
    c_{d-2}\ee_{d+1}
    \\
    c_{d-2}\ee_d + c_{d-1}\ee_{d+2}
    \\
    c_{d-1}\ee_{d+1}
    \\
    c_{d-1}\ee_{d+3}
    \\
    \vdots
    \\
    c_{d-1}\ee_{N-1}
  \end{bmatrix}.
  \]
  Since $c_1, \dots, c_{d-1} \in K$ are general,
  the matrix \ref{eq:def-matDFG} in \autoref{thm:rk-matrixFG}
  is of rank $N+1$,
  which implies that
  $J_h \cap \Go_1$   is smooth and of codimension $N+1$ in $\Go_1 \simeq \A^{2(N-1)}$ at the point $L$.
\end{ex}

\begin{rem}\label{thm:rem-ch2}
  Let the characteristic is \emph{not} equal to $2$.
  Then,
  by taking $\tilde h$ to be
  \[
  T^{d-2}(Z_{d}^2+\dots+Z_{N-1}^2),
  \]
  we can show the same statement of \autoref{thm:hypsurf-constr}
  with easier calculations.
  However, this does not work in characteristic $2$.
\end{rem}

\subsection{Complete intersections}

First we assume that $d^i \geq 3$ for some $i$.
Then \autoref{thm:hypsurf-constr} is generalized to the case of $r \geq 2$.
For example, we can calculate the case of $(r, N, (d^1, d^2)) = (2, 9, (4, 3))$,
as follows.

\begin{ex}\label{thm:ex-ci43}
  Let $\psi: H^0(\PP^1, \sO(3)) \rightarrow K$ be a general linear functional
  with $c_0 = 1$ in \ref{eq:psi-c}.
  Let $X \subset \PP^9$
  be a complete intersection
  defined by the following homogeneous polynomials $h^1, h^2$ of degrees $4$, $3$:
  \begin{align*}
    & h^1 := h^1_1Z_1 + h^1_2Z_2 + h^1_3Z_3 + T^2Z_4Z_5
    \quad (h^1_j := c_{j}S^{3} - S^{3-j}T^{j}),
    \\
    & h^2 := S^2Z_6 + TZ_7 + T^2Z_8.
  \end{align*}
  Let $L = (Z_1 = \dots Z_8 = 0) \subset \PP^9$.
  Then $(L, (h^1, h^2)) \in J$, and
  $J_{(h^1, h^2)}$ is smooth and of dimension $N-r-2=5$ at the point $L \in \Gr(1, \PP^9)$.
  The reason is as follows.

  It follows that $h^1_{Z_j} \circ \xi = h^1_j \circ \xi = c_js^3-s^{3-j}t^j$ ($j=1,2,3$). Thus
  we find that $H^0(\delta_L(h^1)(-1))$ is not surjective as in \autoref{thm:psi-zero}.
  This implies that $H^0(\delta_L(h^1, h^2)(-1))$ is also not surjective (see \autoref{thm:ci-delh-rem});
  hence $(L, (h^1, h^2)) \in J$.
  The calculation of $\Da(f^1_k)_L$ and $\Db(f^1_k)_L$ with $0 \leq k \leq 4$
  is the same as \autoref{thm:hypsurf-4,6}.
  On the other hand,
  from \ref{eq:diffF-1}, \ref{eq:diffF-2}, we have
  \begin{equation*}
    \begin{bmatrix}
      \Da(f^2_0)\\
      \Da(f^2_1)\\
      \Da(f^2_2)\\
      \Da(f^2_3)
    \end{bmatrix}_L
    =
    \begin{bmatrix}
      \ee_6
      \\
      \ee_7
      \\
      \ee_8
      \\
      \bm{0}
    \end{bmatrix},\
    \begin{bmatrix}
      \Db(f^2_0)\\
      \Db(f^2_1)\\
      \Db(f^2_2)\\
      \Db(f^2_3)
    \end{bmatrix}_L
    =
    \begin{bmatrix}
      \bm{0}
      \\
      \ee_6
      \\
      \ee_7
      \\
      \ee_8
    \end{bmatrix}.
  \end{equation*}
  Next, from \ref{eq:defMhZ}, we have
  \[
  M(h^1, h^2) =
  \begin{bmatrix}
    M' \\
    & E_3
  \end{bmatrix},
  \]
  where $M'$ is a $5 \times 5$ matrix which is the same as
  the left hand side of \ref{eq:Mh-46}.
  Thus, in a similar way to \autoref{thm:hypsurf-4,6},
  we find that $M(h^1, h^2)$ is of rank $< 7$
  if and only if the $7 \times 7$ minors $g_1, g_2$ are zero.
  As a result, the matrix \ref{eq:def-matDFG} in \autoref{thm:rk-matrixFG}
  is of rank $11$,
  which implies that
  $J_{(h^1, h^2)} \cap \Go_1$
  is smooth and of codimension $11$ in $\Go_1 \simeq \A^{16}$ at $L$.
\end{ex}

We denote by $\abs{\bm d}_{i_0} := \sum_{i = i_0}^{r} d^i$. Then $\abs{\bm d} = \abs{\bm d}_1$ and $\abs{\bm d}_{r} = d^r$.
\begin{ex}\label{thm:ci-constr}
  Assume that $d^1 \geq 3$ and $\sum d^i \leq N-2$.
  Let $\psi: H^0(\PP^1, \sO(d^1-1)) \rightarrow K$ be a general linear functional
  with $c_0 = 1$ in \ref{eq:psi-c}.
  Let $X \subset \PN$
  be a hypersurface defined by the following homogeneous polynomials
  $h^1, \dots, h^r$ of degrees $d^1, \dots, d^r$.
  We set
  $h^1 = h^1_1Z_1 + h^1_2Z_2 + \dots + h^1_{d^1-1}Z_{d^1-1} + \tilde h^1$,
  where
  $h^1_j := c_{j}S^{d^1-1} - S^{d^1-j-1}T^{j}$, and where
  \[
  \tilde h^1
  :=
  T^{d^1-2}(Z_{d^1}Z_{d^1+1} + Z_{d^1+2}Z_{d^1+3} + \dots + Z_{N-2-\abs{\bm d}_2}Z_{N-1-\abs{\bm d}_2})
  \]
  if $N - \abs{\bm d}$ is even, or
  \[
  \tilde h^1
  := ST^{d^1-3}Z_{d^1}Z_{d^1+1}
  + T^{d^1-2}(Z_{d^1+1}Z_{d^1+2} + Z_{d^1+3}Z_{d^1+4} + \dots + Z_{N-2-\abs{\bm d}_2}Z_{N-1-\abs{\bm d}_2})
  \]
  if $N - \abs{\bm d}$ is odd.
  Next we set $h^2, \dots, h^r$ by
  \begin{gather*}
    h^i = S^{d^i-1}Z_{N-\abs{\bm d}_i}+S^{d^i-2}TZ_{N+1-\abs{\bm d}_i}+\dots+T^{d^i-1}Z_{N-1-\abs{\bm d}_{i+1}}
    \quad (2 \leq i \leq r).
  \end{gather*}
  Let $L = (Z_1 = \dots Z_{N-1} = 0)$.
  Then, in a similar way to \autoref{thm:hypsurf-constr} and \autoref{thm:ex-ci43}, 
  one can show that $(L, (h^1, \dots, h^r)) \in J$ and that
  $J_{(h^1, \dots, h^r)}$ is smooth and of dimension $N-r-2$ at the point $L \in \Gr(1, \PN)$.
\end{ex}

\subsection{Complete intersections of hyperquadrics}
We complete the case where $r \geq 2$ and $d^1 = \dots = d^r = 2$.
For example, for $(N, r, (d^1, d^2)) = (7, 2, (2, 2))$, we have:

\begin{ex}\label{thm:ex-7222}
  Let $X \subset \PP^7$
  be the complete intersection defined by the following homogeneous quadric polynomials:
  \[
  h^1 = SZ_1 + TZ_2 + Z_5Z_6,\
  h^2 = SZ_2 + TZ_3 + Z_4Z_5.
  \]
  Let $L = (Z_1 = \dots Z_{6} = 0)$.
  Then $(L, (h^1, h^2)) \in J$, and
  $J_{(h^1, h^2)}$
  is smooth and of dimension $N-r-2 = 3$ at the point $L \in \Gr(1, \PP^7)$.
  The reason is as follows.

  It follows that
  \[
  [h^1_{Z_1}|_L,\ h^2_{Z_1}|_L] = [s,\ 0],\
  [h^1_{Z_2}|_L,\ h^2_{Z_2}|_L] = [t,\ s],\
  [h^1_{Z_3}|_L,\ h^2_{Z_3}|_L] = [0,\ t]
  \]
  and that other $[h^1_{Z_i}|_L,\ h^2_{Z_i}|_L]$'s are zero.
  Thus the above elements does not span the $4$-dimensional vector space
  $H^0(L, \sO(1)) \oplus H^0(L, \sO(1))$.
  Then, from \autoref{thm:ci-delh-rem}, we have $(L, (h^1, h^2)) \in J$.

  Next, we have
  \begin{gather*}
    \begin{bmatrix}
      \Da(f^1_0)
      \\
      \Da(f^1_1)
      \\
      \Da(f^1_2)
    \end{bmatrix}_L
    =
    \begin{bmatrix}
      \ee_1
      \\
      \ee_2
      \\
      \bm{0}
    \end{bmatrix},\
    \begin{bmatrix}
      \Db(f^1_0)
      \\
      \Db(f^1_1)
      \\
      \Db(f^1_2)
    \end{bmatrix}_L
    =
    \begin{bmatrix}
      \bm{0}
      \\
      \ee_1
      \\
      \ee_2
    \end{bmatrix},
    \\
    \begin{bmatrix}
      \Da(f^2_0)
      \\
      \Da(f^2_1)
      \\
      \Da(f^2_2)
    \end{bmatrix}_L
    =
    \begin{bmatrix}
      \ee_2
      \\
      \ee_3
      \\
      \bm{0}
    \end{bmatrix},\
    \begin{bmatrix}
      \Db(f^2_0)
      \\
      \Db(f^2_1)
      \\
      \Db(f^2_2)
    \end{bmatrix}_L
    =
    \begin{bmatrix}
      \bm{0}
      \\
      \ee_2
      \\
      \ee_3
    \end{bmatrix}.
  \end{gather*}
  On the other hand, we have
  \[
  M(h^1, h^2)
  =
  \begin{bmatrix}
    1 & 
    \\
    & 1 & 1 & 
    \\
    &  &  &  1
    \\
    &&a_5& b_5
    \\
    a_6& b_6 & a_4& b_4
    \\
    a_5 & b_5
  \end{bmatrix}
  \sim
  \begin{bmatrix}
    1 & 
    \\
    & 0 & 1 & 
    \\
    &  &  &  1
    \\
    &-a_5 &a_5& b_5
    \\
    a_6& b_6 -a_4 & a_4& b_4
    \\
    a_5 & b_5
  \end{bmatrix},
  \]
  where the right matrix is obtained by subtracting the third column from the second column.
  Hence
  $\rk(M(h^1, h^2)) < 4$
  if and only if $4 \times 4$ minors
  $g_1 = -a_5, g_2 = b_6-a_4, g_3 = b_5$ are zero.
  Here, it follows that
  \[
  \begin{bmatrix}
    \Da(g_1)
    \\
    \Da(g_2)
    \\
    \Da(g_3)
  \end{bmatrix}_L
  = 
  \begin{bmatrix}
    -\ee_5
    \\
    -\ee_4
    \\
    \bm{0}
  \end{bmatrix},\
  \begin{bmatrix}
    \Db(g_1)
    \\
    \Db(g_2)
    \\
    \Db(g_3)
  \end{bmatrix}_L
  =
  \begin{bmatrix}
    \bm{0}
    \\
    \ee_6
    \\
    \ee_5
  \end{bmatrix}.
  \]
  Therefore, the matrix \ref{eq:def-matDFG} in \autoref{thm:rk-matrixFG}
  is of rank $8$; then
  $J_{(h^1, h^2)} \cap \Go_1$ is smooth and of codimension $9$ in $\Go_1 \simeq \A^{12}$ at $L$.

\end{ex}

\begin{ex}\label{thm:exp-2..2}
  Assume that $r \geq 2$ and $2r \leq N-2$. 
  We set $X \subset \PN$
  to be the complete intersection defined by
  the following homogeneous quadric polynomials $h^1, \dots, h^r$; we set $h^1, h^2$ by
  \begin{gather*}
    h^1 = SZ_1 + TZ_2 + \tilde h^1,\
    h^2 = SZ_2 + TZ_3 + \tilde h^2,\
  \end{gather*}
  where
  \begin{align*}
    &\tilde h^1 := Z_{2r+1}Z_{2r+2} + \dots + Z_{N-2}Z_{N-1},\
    \tilde h^2 := Z_{2r}Z_{2r+1},
    &\text{ if $N-2r$ is odd,}
    \\
    &\tilde h^1 := Z_{2r}Z_{2r+1} + \dots + Z_{N-2}Z_{N-1},\
    \tilde h^2 := 0,
    &\text{ if $N-2r$ is even};
  \end{align*}
  and we set $h^3, \dots, h^r$ by
  \[
  h^3 = SZ_4 + TZ_5,\
  \dots,
  h^i = SZ_{2i-2} + TZ_{2i-1},\
  \dots,
  h^r = SZ_{2r-2} + TZ_{2r-1}.
  \]

  Let $L = (Z_1 = \dots Z_{N} = 0)$.
  Then $(L, (h^1, \dots, h^r)) \in J$, and
  $J_{(h^1, \dots, h^r)}$ is smooth and of dimension $N-r-2$ at the point $L \in \Gr(1, \PN)$.
  This assertion follows in a similar way to \autoref{thm:ex-7222}.
\end{ex}

\subsection{Surjectivity of the projection from $J$ to $\sH$}

First, we consider the remained part, $\abs{\bm d} = N-1$:
\begin{lem}\label{thm:etale}
  If $\abs{\bm d} = N-1$, then the statement of \autoref{thm:surj-J-to-H} holds.
\end{lem}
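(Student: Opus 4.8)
The plan is to prove the statement fibrewise: to show that for \emph{every} $\bm h\in\sH$ the complete intersection $X$ it defines admits a line $L$ with $(L,\bm h)\in J$, which is exactly surjectivity of $\pr_2|_J$. The point of the hypothesis $\abs{\bm d}=N-1$ is that then the matrix $M(\bm h)$ of \ref{eq:defMhZ} is \emph{square}, of size $(N-1)\times(N-1)$, so that inside $I$ the locus $J$ is cut out by the single equation $\det M(\bm h)=0$. More intrinsically, fix $\bm h$ and let $\Fh\subset\Gr(1,\PN)$ be the Fano scheme of lines on $X$; by \autoref{thm:ci-delh}, \autoref{thm:delL-naturalmap} and \autoref{thm:ci-delh-rem}, as $L$ ranges over $\Fh$ the linear maps $H^0(\delta_L(\bm h)(-1))$ assemble into a homomorphism of vector bundles
\[
  u_{\bm h}\colon\ \mathcal{Q}|_{\Fh}\ \longrightarrow\ \mathcal{B}|_{\Fh},\qquad \mathcal{B}:=\bigoplus_{\RNi}\mathrm{Sym}^{d^i-1}\mathcal{S}\spcheck,
\]
where $\mathcal{S},\mathcal{Q}$ are the universal sub- and quotient bundles of $\Gr(1,\PN)$, and $u_{\bm h}$ is represented over the chart $\Go_1$ by $M(\bm h)$. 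Here $\rk\mathcal{Q}=N-1=\abs{\bm d}=\rk\mathcal{B}$, while $\det\mathcal{Q}=\sO_{\Gr(1,\PN)}(1)$ and, using $\det\mathcal{S}\spcheck=\sO_{\Gr(1,\PN)}(1)$, also $\det\mathcal{B}=\sO_{\Gr(1,\PN)}\bigl(\tfrac12\sum_{\RNi}d^i(d^i-1)\bigr)$. By the definition of $J$ (equivalently by \autoref{thm:equiv-surj-nonfree}), $J_{\bm h}$ is the degeneracy locus of $u_{\bm h}$, i.e.\ the zero scheme of the section $\det u_{\bm h}\in H^0\bigl(\Fh,\sO_{\Gr(1,\PN)}(e)|_{\Fh}\bigr)$ with $e:=\tfrac12\sum_{\RNi}d^i(d^i-1)-1$.

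I would then record two positivity facts. First, $e\geq 1$: if $r\geq 2$ then $\tfrac12\sum_{\RNi}d^i(d^i-1)\geq r\geq 2$ since each $d^i\geq 2$, and if $r=1$ the hypothesis $\prod_{\RNi}d^i>2$ forces $d^1\geq 3$, so $\tfrac12 d^1(d^1-1)\geq 3$; in both cases $e\geq 1$, hence $\sO_{\Gr(1,\PN)}(e)|_{\Fh}$ is the restriction of an ample line bundle and is therefore ample. Second, $\Fh$ is nonempty of dimension $\geq 1$: nonemptiness is \autoref{thm:sIsH-surj}, whose hypothesis $2N-2-r\geq\abs{\bm d}$ holds here; and since $I$ is irreducible (its fibres over $\Gr(1,\PN)$ are products of projective spaces by \autoref{thm:codimsIsH}) and $\pr_2\colon I\to\sH$ is surjective, every fibre has dimension at least $\dim I-\dim\sH=2N-2-\abs{\bm d}-r=N-1-r$ by \ref{eq:dimI-dimH}, and $N-1-r=\abs{\bm d}-r\geq 2r-r\geq 1$.

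Finally I would conclude: were $J_{\bm h}$ empty, $\det u_{\bm h}$ would be a nowhere-vanishing global section of $\sO_{\Gr(1,\PN)}(e)|_{\Fh}$, making that line bundle trivial on $\Fh$; but restricting to an integral projective curve $C\subset\Fh$ (which exists as $\dim\Fh\geq 1$) would give $\deg\bigl(\sO_{\Gr(1,\PN)}(e)|_C\bigr)=0$, contradicting $e\geq 1$ and the ampleness of $\sO_{\Gr(1,\PN)}(1)$. Hence $J_{\bm h}\neq\emptyset$ for every $\bm h$, as claimed. I expect the real work to lie entirely in the two positivity checks — that the twist $e$ is strictly positive (this is exactly where the hypotheses $\prod_{\RNi}d^i>2$ and all $d^i>1$ are used) and that $\Fh$ has positive dimension for \emph{every} member of $\sH$ (again using all $d^i>1$, via $\abs{\bm d}\geq 2r$); granting these, the vanishing of $\det u_{\bm h}$, and hence the existence of a line $L$ with $(L,\bm h)\in J$, is forced with no further computation.
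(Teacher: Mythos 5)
Your proof is correct, but it takes a genuinely different route from the paper's. The paper argues by contradiction via covering‑space theory: if a general $\bm h$ had $\pr_2^{-1}(\bm h)\cap J=\emptyset$, then every line $L\subset X$ would be free with trivial normal bundle (its degree is $(N-1)-\abs{\bm d}=0$), so the evaluation map $u\colon\Univ\rightarrow X$ from the universal family of lines would be \'etale; since $X$ is simply connected by Fulton--Hansen, $u$ would be an isomorphism, contradicting the fact that a general point of a general such $X$ lies on more than one line (cf.\ \autoref{thm:deg-of-FXx}). You instead exploit that for $\abs{\bm d}=N-1$ the matrix $M(\bm h)$ is square, so that $J_{\bm h}\subset\Fh$ is the zero scheme of $\det u_{\bm h}$, a section of $\det\mathcal{Q}\spcheck\otimes\det\mathcal{B}=\sO_{\Gr(1,\PN)}(e)|_{\Fh}$ with $e=\sum_i d^i(d^i-1)/2-1$; the hypotheses $d^i>1$ and $\prod d^i>2$ give exactly $e\geq 1$, and the fibre-dimension bound $\dim\Fh\geq N-1-r\geq 1$ (legitimate, since $I$ is irreducible and $\pr_2$ is surjective) forces the section to vanish. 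The determinant computation $\det\mathrm{Sym}^{d-1}\mathcal{S}\spcheck=\sO_{\Gr(1,\PN)}(d(d-1)/2)$ is standard, so the argument goes through; the one point worth making explicit is that $M(\bm h)$ only describes $u_{\bm h}$ over the chart $\Go_1$, so the intrinsic identifications $H^0(L,N_{L/\PN}(-1))\simeq Q$ and $H^0(L,\sO_L(d^i-1))\simeq\mathrm{Sym}^{d^i-1}S\spcheck$ are what justify the global line-bundle statement. Your approach buys a fibrewise conclusion ($J_{\bm h}\neq\emptyset$ for \emph{every} $\bm h$, with no generality or smoothness assumptions on $X$), avoids Fulton--Hansen entirely, and is visibly characteristic-free; the paper's approach avoids the Chern-class bookkeeping and gives a more geometric picture (trivial normal bundles would force an \'etale multisection of $X$). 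It is also a pleasant check on your computation that the single excluded case $r=1$, $d^1=2$ gives $e=0$, matching the fact that all lines on a smooth quadric surface are free.
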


\begin{rem}
  Since all $d^i \geq 2$, we have $2r \leq \abs{\bm d} < N$.
  Hence $2(N-r) = 2N -2r > N$.
  Then \cite[Cor. 2]{FH} implies that
  a smooth complete intersection $X \subset \PN$ of type $\bm d$ is simply connected.
\end{rem}

\begin{proof}[Proof of \autoref{thm:etale}]
  Assume that $\pr_2(J) \neq \sH$, and take a general $\bm h \in \sH$ such that $\bm h \notin \pr_2(J)$.
  Let $X \subset \PN$ be a complete intersection defined by $\bm h$.
  We denote by $\Fh \subset \Gr(1, \PN):= \pr_1(\pr_2^{-1}(\bm h))$,
  which is the space of lines lying in $X$,
  and consider universal family $\Univ \subset \Fh \times X$ with projection $u: \Univ \rightarrow X$.
  Here $u^{-1}(x)$ is identified with the space of lines lying in $X$ passing through a point $x$.
  Since $\pr_2^{-1}(\bm h) \cap J = \emptyset$, every line $L \in \Fh$ is free, where we have $N_{L/X} = \sO_L^{\oplus N-1}$.
  Since $h^1(L, N_{L/X}(-1)) = 0$,
  every fiber $u^{-1}(x)$ is smooth and of dimension $h^0(L, N_{L/X}(-1)) = 0$.
  Hence $u$ is \'etale morphism.
  Since $X$ is simply connected, $u$ is isomorphic.
  This contradicts that the length of $u^{-1}(x)$ is $> 1$ if $X$ is general (see \autoref{thm:deg-of-FXx}).
\end{proof}

\begin{proof}[Proof of \autoref{thm:ci-constr-statement}]
  It follows from Examples \ref{thm:ci-constr} and \ref{thm:exp-2..2}.
\end{proof}

Now, we have the surjectivity of $\pr_2|_J: J \rightarrow \sH$.

\begin{proof}[Proof of \autoref{thm:surj-J-to-H}]
  The case of $\abs{\bm d} \geq N-1$
  follows immediately from \autoref{thm:J=I} and \autoref{thm:etale}.

  Assume $\abs{\bm d} \leq N-2$.
  From \autoref{thm:ci-constr-statement},
  there exists a pair $(L, \bm h) \in J$ such that
  $\pr_2^{-1}(\bm h) \cap J \simeq J_{\bm h}$
  is smooth and of dimension $N-r-2$ at $(L, \bm h)$.
  We consider the exact sequence
  \[
  0 \rightarrow T_{(L, \bm h)} (\pr_2^{-1}(\bm h) \cap J)
  \rightarrow T_{(L, \bm h)} J
  \xrightarrow{d_{(L, \bm h)} (\pr_2|_{J})} T_{\bm h}\sH.
  \]
  From \autoref{thm:JinI-codim} and the equality~\ref{eq:dimI-dimH} in \autoref{thm:codimsIsH},
  we have
  \begin{multline*}
    \dim (T_{(L, \bm h)} J) - \dim (T_{(L, \bm h)} (\pr_2^{-1}(\bm h)\cap J))
    \\
    \geq (\dim (I) - (N - \abs{\bm d})) - (N-r-2) = \dim (\sH),
  \end{multline*}
  which implies that
  the linear map $d_{(L, \bm h)}(\pr_2|_{J})$ is surjective.
  Hence the morphism $\pr_2|_{J}$ is smooth on an open neighborhood of $(L, \bm h)$ in $J$, and hence, is surjective.
\end{proof}

\section{Non-free rational curves}
\label{sec:existence-non-free}

From \autoref{thm:surj-J-to-H}, there exists a non-free line lying in a complete intersection $X \subset \PN$
in the case of $2N-2-r \geq \abs{\bm d}$ and $\deg(X) > 2$. On the other hand,
for $\abs{\bm d} > N$, the following lemma is immediately obtained:

\begin{lem}\label{thm:X-d>N-nonfree}
  Let $X \subset \PN$ be a smooth complete intersection of type $\bm d = (d_1, \dots, d_r)$.
  If $\abs{\bm d} > N$, then every immersion $\mu: \PP^1 \rightarrow X$ is non-free.
\end{lem}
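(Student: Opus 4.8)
The plan is a short degree computation on $\PP^1$. Let $\mu\colon \PP^1 \to X$ be an immersion and put $e := \deg \mu^*\sO_{\PN}(1)$; since an immersion of $\PP^1$ cannot be constant, its image is a curve and $e \geq 1$. First I would compute $\deg \mu^*T_X$: the Euler sequence on $\PN$ gives $\deg \mu^*T_{\PN} = (N+1)e$, and since $X$ is smooth the exact sequence $0 \to T_X \to T_{\PN}|_X \to N_{X/\PN} \to 0$ together with $N_{X/\PN} = \bigoplus_{\RNi}\sO_X(d^i)$ yields
\[
\deg \mu^*T_X = (N+1)e - \abs{\bm d}\, e = \bigl(N+1-\abs{\bm d}\bigr)\,e.
\]
Under the hypothesis $\abs{\bm d} > N$ we have $N+1-\abs{\bm d} \leq 0$, and since $e\geq 1$ this gives $\deg \mu^*T_X \leq 0$.

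Next I would assume, for contradiction, that $\mu$ is free and split $\mu^*T_X \cong \bigoplus_{i=1}^{N-r}\sO_{\PP^1}(a_i)$ by Grothendieck's theorem. Freeness means $H^1(\PP^1,\mu^*T_X \otimes \sO_{\PP^1}(-1)) = H^1\bigl(\bigoplus_i \sO_{\PP^1}(a_i-1)\bigr) = 0$, which forces $a_i \geq 0$ for every $i$. Moreover, $\mu$ being an immersion means the differential $T_{\PP^1} \to \mu^*T_X$ is fibrewise injective; in particular it is a nonzero map $\sO_{\PP^1}(2)\cong T_{\PP^1}\to \bigoplus_i\sO_{\PP^1}(a_i)$, so for at least one index $j$ the composite $\sO_{\PP^1}(2)\to\sO_{\PP^1}(a_j)$ is nonzero, whence $a_j\geq 2$. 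Therefore $\deg \mu^*T_X = \sum_i a_i \geq 2$, contradicting $\deg \mu^*T_X \leq 0$ from the previous step. Hence $\mu$ is non-free.

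I do not expect any real obstacle: the statement is a consequence of the identity for $\deg \mu^*T_X$ and the elementary structure theory of bundles on $\PP^1$, and nothing about the characteristic intervenes. The only point requiring a little care is the borderline case $\abs{\bm d} = N+1$, where $\deg \mu^*T_X = 0$: here mere non-constancy of $\mu$ does not suffice, and one genuinely uses that $\mu$ is an immersion — equivalently, that $\sO_{\PP^1}(2)$ cannot occur as a subsheaf of $\sO_{\PP^1}^{\oplus(N-r)}$, which is what $\mu^*T_X$ would have to be if all the $a_i$ were forced to vanish.
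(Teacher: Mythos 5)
Your proposal is correct and follows essentially the same route as the paper: both compute $\deg\mu^*T_X = (N+1-\abs{\bm d})\cdot\deg\mu^*\sO(1) \leq 0$ from the determinant of $T_X$, use the immersion hypothesis to force a summand $\sO_{\PP^1}(a_j)$ with $a_j \geq 2$ in the Grothendieck splitting, and conclude that some summand has negative degree, so $H^1(\PP^1,\mu^*T_X\otimes\sO_{\PP^1}(-1))\neq 0$. Your remark about the borderline case $\abs{\bm d}=N+1$ correctly identifies where the immersion hypothesis is genuinely used.
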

\begin{proof}
  Let 
  $\mu^*T_X \simeq \bigoplus_{i=1}^{N-r} \sO_{\PP^1} (a_i)$ be the splitting on $\PP^1$
  with integers $a_1 \geq a_2 \geq \cdots \geq a_{N-r}$.
  Taking the positive integer $b$ with $\mu^*(\sO_X(1)) = \sO_{\PP^1}(b)$,
  since
  $\bigwedge^{N-r} T_X =  \sO_X(N+1-\abs{\bm d})$,
  we have $\sum_{1 \leq i \leq N-r}a_i = b(N+1-\abs{\bm d}) \leq 0$.
  Since $T_{\PP^1} \rightarrow \mu^*T_X$ is injective, we find that $a_1 \geq 2$.
  Therefore $a_{i_0} < 0$ for some $i_0$. Then $H^1(\PP^1, \mu^*T_X \otimes \sO_{\PP^1}(-1)) \neq 0$.
\end{proof}

\begin{rem}\label{thm:SRC-conditions}
  We check that each of the conditions (ii-iv) in \autoref{sec:introduction} implies
  the condition (i) in \autoref{thm:mainthm}.
  First, (iv) implies (i); this is because, $X$ contains a line if (iv) holds, as in \autoref{thm:sIsH-surj}.
  Next, (ii) or (iii) implies (iv), as follows.
  From \cite[IV, Thm.~3.7]{Ko},
  $X$ is separably rationally connected if and only if
  there exists $\mu: \PP^1 \rightarrow X$ such that $\mu^*T_X$ is ample,
  where the latter condition implies $\abs{\bm d} \leq N$.
  On the other hand, $X$ is Fano if and only if $\abs{\bm d} \leq N$.
\end{rem}

\begin{proof}[Proof of \autoref{thm:mainthm}]
  Let $X \subset \PN$ be a complete intersection of type $\bm d = (d^1, \dots, d^r)$
  satisfying the assumption of \autoref{thm:mainthm}.
  Without loss of generality,
  we may assume that all $d^i > 1$.

  Suppose that $\prod d^i =\deg(X) > 2$.
  Then we can find a non-free immersion $\mu: \PP^1 \rightarrow X$, as follows.
  Assume $2N-2-r \geq \abs{\bm d}$.
  Taking defining polynomials $\bm h = (h^1, \dots, h^r)$ of $X$,
  from \autoref{thm:surj-J-to-H},
  we have that
  $J \cap \pr_2^{-1}(\bm h) \neq \emptyset$;
  this means that a non-free line in $X$ exists.
  Assume $2N-2-r < \abs{\bm d}$.
  We may also assume $N-r \geq 2$. Then $N < \abs{\bm d}$ holds.
  By the condition (i) in \autoref{thm:mainthm},
  an immersion exists and is non-free because of \autoref{thm:X-d>N-nonfree}.

  Let $\mu: \PP^1 \rightarrow X$ be a non-free immersion.
  For the splitting
  $\mu^*T_X \simeq \bigoplus_{i=1}^{N-r} \sO_{\PP^1} (a_i)$
  with $a_i \in \ZZ$, it follows $a_{i_0} \leq -1$ for some $i_0$.
  Taking a double cover $\iota_2: \PP^1 \rightarrow \PP^1$,
  since the splitting of $(\iota_2 \circ \mu)^* T_X$ contains $\sO_{\PP^1}(2a_{i_0})$,
  we have $H^1(\PP^1, (\iota_2 \circ \mu)^* T_X) \neq 0$,
  which contradicts that $X$ is convex.
  Hence $\deg(X) \leq 2$ must hold.
\end{proof}

\vspace{1ex}%

\end{document}